\newtheorem{theorem}{Theorem}[section]
\newtheorem{lemma}{Lemma}[section]
\newtheorem{corollary}{Corollary}[section]
\newtheorem{assumption}{Assumption}[section]
\newtheorem{remark}{Remark}[section]
\newcommand{\argmin}{\operatorname{argmin}}
\newcommand{\unif}{\operatorname{Unif}}
\newcommand{\reals}{{\mathbb{R}}}
\newcommand{\inner}[2]{\langle{#1},{#2}\rangle}
\newcommand{\sign}{\mathtt{sign}}
\newcommand{\mE}{\mathbb{E}}
\newcommand{\cP}{{\mathcal{P}}}
\newcommand{\cO}{{\mathcal{O}}}
\newcommand{\cF}{{\mathcal{F}}}
\newcommand{\bx}{{\boldsymbol{x}}}
\newcommand{\bg}{{\boldsymbol{g}}}
\newcommand{\by}{{\boldsymbol{y}}}
\newcommand{\bz}{{\boldsymbol{z}}}
\newcommand{\bu}{{\boldsymbol{u}}}
\newcommand{\bv}{{\boldsymbol{v}}}
\newcommand{\bs}{{\boldsymbol{s}}}
\newcommand{\bc}{{\boldsymbol{c}}}
\title{Non-Stationary Bandit Convex Optimization: A Near-Optimal Algorithm with Two-Point Feedback}
\author{
{Chang He} \thanks{School of Information Management and Engineering, Shanghai University of Finance and Economics. \texttt{ischanghe@gmail.com}}
\and
{Bo Jiang} \thanks{School of Information Management and Engineering, Shanghai University of Finance and Economics. \texttt{isyebojiang@gmail.com}}
\and
{Shuzhong Zhang} \thanks{Department of Industrial and System Engineering, University of Minnesota. \texttt{zhangs@umn.edu}}
}
\begin{document}
\maketitle
\begin{abstract}
    This paper studies bandit convex optimization in non-stationary environments with two-point feedback, using dynamic regret as the performance measure. We propose an algorithm based on bandit mirror descent that extends naturally to non-Euclidean settings. Let $T$ be the total number of iterations and $\mathcal{P}_{T,p}$ the path variation with respect to the $\ell_p$-norm. In Euclidean space, our algorithm matches the optimal regret bound $\mathcal{O}(\sqrt{dT(1+\mathcal{P}_{T,2})})$, improving upon \citet{zhao2021bandit} by a factor of $\mathcal{O}(\sqrt{d})$. Beyond Euclidean settings, our algorithm achieves an upper bound of $\mathcal{O}(\sqrt{d\log(d)T\log(T)(1 + \mathcal{P}_{T,1})})$ on the simplex, which is nearly optimal up to log factors. For the cross-polytope, the bound reduces to $\mathcal{O}(\sqrt{d\log(d)T(1+\mathcal{P}_{T,p})})$ for some $p = 1 + 1/\log(d)$.  
\end{abstract}
\section{Introduction}
We study the problem of Bandit Convex Optimization (BCO) in non-stationary environments with two-point feedback \citep{zhao2021bandit}, which has successfully modeled many real-world scenarios where the feedback is incomplete \citep{hazan2009efficient}. For example, emerging online network tasks such as fog computing in the Internet of Things require online decisions to flexibly adapt to changing user preferences \citep{chen2018bandit}. This problem can be viewed as a repeated game between a learner and an adversary as follows: At each iteration $t$, the adversary selects a convex loss function $f_t$ on $\mathbb{R}^d$, which remains unknown to the learner. The learner then chooses two points, $\bx^{t,+}$ and $\bx^{t,-}$, from a compact convex set $\mathcal{X} \subseteq \mathbb{R}^d$ and suffers the corresponding losses, $f_t(\bx^{t,+})$ and $f_t(\bx^{t,-})$. In non-stationary environments, the learner's goal is to minimize the \textit{dynamic regret} introduced by \cite{zinkevich2003online}, defined as
\begin{equation*}
   \sum_{t=1}^T \frac{f_t\left(\bx^{t,+}\right) + f_t\left(\bx^{t,-}\right)}{2}- \sum_{t=1}^T f_t(\bu^t).
\end{equation*}
This quantity measures the difference between the learner's cumulative loss and that of a comparator sequence $\bu^1, \ldots, \bu^T \in \mathcal{X}$. The comparator sequence is not known to the learner; our goal is therefore to obtain bounds that hold universally over all comparator sequences. In particular, if the comparator sequence is chosen as $\bu^1 = \ldots = \bu^T = \argmin_{\bx \in \mathcal{X}} \ \sum_{t=1}^T f_t(\bx)$, then the dynamic regret reduces to the well-studied static regret \citep{agarwal2010optimal,duchi2015optimal,shamir2017optimal}, and the dynamic regret bound automatically adapts to stationary environments. In a setting parallel to ours, \citet{liu2025non} recently provided a comprehensive dynamic regret analysis under one-point feedback.

Dynamic regret analysis has attracted considerable attention due to its flexibility, as static regret can be too optimistic and may not hold in non-stationary environments, where data are evolving and the optimal decision is drifting over time \citep{zinkevich2003online,besbes2015non,zhang2018adaptive,zhao2024adaptivity}. In the setting where full information about the loss function is available, \citet{zinkevich2003online} proved that online gradient descent achieves a regret bound of $\mathcal{O}(\sqrt{T}(1 + \mathcal{P}_{T, 2}))$, where $\mathcal{P}_{T, 2}$ represents the path variation of the comparator sequence $\bu^1, \ldots, \bu^T$ with respect to the $\ell_2$-norm, defined as $\mathcal{P}_{T, 2} \triangleq \sum_{t=1}^{T-1} \|\bu^{t+1} - \bu^t\|_2$. \cite{zhang2018adaptive} showed that this upper bound is not tight by establishing a lower bound of $\Omega(\sqrt{T(1 + \mathcal{P}_{T, 2})})$, and proposed an online algorithm that achieves the optimal result. Moreover, many studies have focused on a special case that the comparator sequence is chosen as $\bu^t = \argmin_{\bx \in \mathcal{X}} f_t(\bx)$, $t = 1, \ldots, T$; see, for example, \citet{jadbabaie2015online, yang2016tracking, mokhtari2016online}. This choice of $\{\bu^t\}_{t=1}^T$ corresponds to the worst-case dynamic regret, which, however, is not the interest of our paper.

While partial information about the loss function is available, research on designing algorithms to minimize the dynamic regret in BCO remains relatively unexplored. \citet{yang2016tracking, chen2018bandit} provided worst-case dynamic regret analyses; however, their algorithms require certain parameters as input, such as the budget of the path variation, which are generally unknown in advance. \citet{zhao2021bandit} took the first step towards developing algorithms that achieve universal dynamic regret guarantees for BCO problems. Based on the idea of online ensemble \citep{zhou2012ensemble}, they proposed a parameter-free bandit gradient descent algorithm\footnote{``Parameter-free" means that the algorithm does not require prior knowledge of the comparator sequence.} and established an upper bound of $\mathcal{O}(d\sqrt{T(1 + \mathcal{P}_{T, 2})})$ with two-point feedback. However, the corresponding lower bound is $\Omega(\sqrt{d T(1 + \mathcal{P}_{T, 2})})$, revealing a clear gap in terms of the dimension $d$. Furthermore, existing studies on non-stationary BCO rely on the Euclidean structure, making it difficult to extend these algorithms to non-Euclidean settings, such as those where decisions are naturally structured on the simplex or the cross-polytope \citep{duchi2010composite,shao2024improved}. 

Motivated by the optimal algorithm designed for static regret \citep{shamir2017optimal}, we propose an algorithm with two-point feedback to address the above issues. Our algorithm is built on bandit mirror descent, allowing the use of the $\ell_p$-norm path variation $\mathcal{P}_{T,p} \triangleq \sum_{t=1}^{T-1} \|\bu^{t+1} - \bu^t\|_p$ to quantify the dynamic regret, thereby extending beyond the standard Euclidean structure. Like previous algorithms with two-point feedback, our algorithm is based on a random gradient estimator. Given a function $f$ and a point $\bx$, the estimator queries $f$ at two random locations near $\bx$ and computes a random vector, denoted as $\bg$, whose expectation is the gradient of a smoothed version of $f$. The key idea for improving dimensional dependence is to incorporate the second-order moment $\mE[\|\bg\|^2_{p^*}]$ into the regret analysis, as $\mE[\|\bg\|^2_{p^*}]$ exhibits significantly lower dependence on the dimension compared to its uniform upper bound $\|\bg\|_{p^*}^2$. Based on this insight, we propose a parameter-free bandit mirror descent algorithm which integrates bandit mirror descent with an expert aggregation over a grid of learning rates. This algorithm eliminates the need for prior knowledge of the comparator sequence and automatically adapts to non-Euclidean settings. In Euclidean space, the dynamic regret bound achieved by our algorithm is optimal and matches the lower bound established in \cite{zhao2021bandit}. On the simplex, the bound is near-optimal, differing only by log factors compared to the lower bound established in Theorem \ref{thm:simplex-dynamic-lower}. A summary of the results across different setups is provided in Table \ref{table.summary}.
\begin{table*}[h]
\centering
\begin{tabular}{@{}ccc@{}}
\toprule
Setup           & Upper bound & Lower bound \\ \midrule
\multirow{2}{*}{Euclidean}      
               & $\mathcal{O}\left(\sqrt{dT\left(1 + \mathcal{P}_{T, 2}\right)}\right)$  & $\Omega\left(\sqrt{dT\mathcal{P}_{T, 2}}\right)$  \\[0.2cm]
               & Corollary \ref{proposition.BMD 2 norm}  & Theorem 5 in \cite{zhao2021bandit} \\ \midrule
\multirow{2}{*}{Simplex}        
               & $\mathcal{O}\left(G \sqrt{d\log(d)T\log(T)\left(1 + \mathcal{P}_{T,1}\right)}\right)$ &  $\Omega\left(\sqrt{dT(1 + \mathcal{P}_{T, 1})/\log(d)}\right)$ \\[0.2cm]  
               & Theorem \ref{proposition.BMD simplex with entropy} & Theorem \ref{thm:simplex-dynamic-lower} \\ \midrule
\multirow{2}{*}{Cross-polytope} 
               & $\mathcal{O}\left(\sqrt{d\log(d)T\left(1 + \mathcal{P}_{T,p}\right)}\right)^\dagger$ & \multirow{2}{*}{N/A}  \\[0.2cm]  
               & Corollary \ref{proposition.BMD cross polytope} \\  \bottomrule
\end{tabular} \\ \vspace{0.3cm}
\caption{Summary of dynamic regret bounds for our algorithm under different setups. The result marked with ``$\dagger$" corresponds to $p = 1 + 1/\log(d)$.}
\label{table.summary}
\end{table*}

\paragraph{Notation.} In the paper, we denote vectors by boldface letters (e.g., $\bx, \by$), and use non-bold letters with subscripts to represent their coordinates (e.g., $x_j, y_j$). The notations $\cO$, $\Omega$, and $\Theta$ follow the standard asymptotic conventions. We assume that $p, q \in[1, \infty], d \geq 3$, and set $p^*, q^* \in[1, \infty]$ such that $1/p + 1/p^*=1$ and $1/q + 1/q^*=1$, with the usual convention $1 / \infty=0$, $0 \log(0) = 0$. The filtration generated by the random vectors sampled upon iteration $t$ is denoted by $\cF_t$, i.e., $\cF_0=\{\emptyset, \Omega\}$ and $\cF_t \triangleq \sigma\left(\bs^{k} \mid k=1, \ldots, t\right), \forall t \geq 1$, where $\sigma$ denotes the $\sigma$-algebra generated by the random vectors. We let $\langle\cdot, \cdot\rangle$ be the standard inner product in $\mathbb{R}^d$. For any $p \in[1, \infty]$, we introduce the unit $\ell_p$-ball and $\ell_p$-sphere respectively as $ \mathbb{B}_p^d \triangleq\left\{\bx \in \mathbb{R}^d:\|\bx\|_p \le 1\right\}$ and $\partial \mathbb{B}_p^d \triangleq\left\{\bx \in \mathbb{R}^d:\|\bx\|_p=1\right\}$. We denote by $\bx \mapsto \sign(\bx)$ the component-wise sign function (defined at $0$ as $1$). Without loss of generality, we assume that $\mathcal{X}$ has a nonempty interior, denoted by $\operatorname{int}(\mathcal{X})$. Given a divergence generating function $\psi : \operatorname{int}(\mathcal{X}) \rightarrow \reals$, the Bregman divergence induced by $\psi$ is defined as $B_{\psi}(\bx;\by) \triangleq \psi(\bx) - \psi(\by) - \inner{\nabla \psi(\by)}{\bx - \by}$. 

\section{Building Block: Bandit Mirror Descent}\label{section.bmd}

\paragraph{Exploration.} Let $\{\by^t\}$ denote the sequence of iterates generated by Algorithm \ref{alg.BMD}. Our exploration strategy employs the following gradient estimator with two-point feedback (Line \ref{line:exploration} in Algorithm \ref{alg.BMD}), based on the $\ell_1$-sphere smoothing technique proposed by \citet{akhavan2022gradient}:  
\begin{equation}\label{eq.gradient estimator}
    \bg^t = \frac{d}{2\mu}\left(f_t(\by^t + \mu \bs^t) - f_t(\by^t - \mu \bs^t)\right) \sign(\bs^t),
\end{equation}
where $\bs^t \sim \unif(\partial \mathbb{B}_1^d)$ follows a uniform distribution over the unit $\ell_1$-sphere, and $\mu > 0$ is the smoothing parameter. The intuition behind this estimator comes from Stokes' theorem, which states that under certain regularity conditions,
$$
\int_D \nabla f(\bx) \mathrm{d} \bx=\int_{\partial D} f(\bx) \overset{\rightarrow}{n}(\bx) \ \mathrm{d} S(\bx),
$$
where $\partial D$ denotes the boundary of $D$, $\overset{\rightarrow}{n}(\bx)$ is the outward normal vector to $\partial D$, and $\mathrm{d} S(\bx)$ is the surface measure. When $D =  \mathbb{B}_1^d$, we have $\overset{\rightarrow}{n}(\bx) = \sign(\bx)/\sqrt{d}$, leading to the derivation of the gradient estimator \eqref{eq.gradient estimator}. For a more detailed discussion, we refer the reader to Section 2 in \citet{akhavan2022gradient}. Since a small perturbation is required to construct the estimator, we impose the following widely used assumption in BCO \citep{flaxman2005online, agarwal2010optimal, zhao2021bandit} to ensure the feasibility of the perturbed points $\by^t \pm \mu \bs^t$.
\begin{assumption}[Bounded Region]\label{assumption.bounded region}
   The feasible set $\mathcal{X}$ contains a ball of radius $r$ centered at the origin and is contained in a ball of radius $R$, namely, $r \mathbb{B}_p^d \subseteq \mathcal{X} \subseteq R \mathbb{B}_p^d$.
\end{assumption}
Then the feasibility issue can be tackled by operating in a slightly smaller set $\mathcal{X}_{\alpha} \triangleq (1-\alpha)\mathcal{X} = \{\by \in \reals^d: \by = (1-\alpha)\bx, \bx \in \mathcal{X}\}$, where $\alpha \in (0, 1)$ is called the shrinkage parameter. It is easy to verify that $\mathcal{X}_{\alpha}$ is compact and convex. Throughout the paper, we use the shorthand $F_\psi^\alpha\triangleq\sup_{\bx\in\mathcal{X}_\alpha}\psi(\bx)-\inf_{\bx\in\mathcal{X}_\alpha}\psi(\bx)$ and $G_\psi^\alpha\triangleq\sup_{\by\in\mathcal{X}_\alpha}\|\nabla\psi(\by)\|_{p^*}$. A suitable choice of $\alpha$ and the smoothing parameter $\mu$ guarantee that the perturbed points $\by^t \pm \mu \bs^t$ remain inside $\mathcal{X}$. The following lemma formalizes the relationship between $\alpha$ and $\mu$ mathematically.

\begin{lemma}\label{lemma.relation between mu and alpha}
   Suppose Assumption \ref{assumption.bounded region} holds and $\alpha \in (0,1)$. If the smoothing parameter and shrinkage parameter satisfy $\mu \le \alpha r$, then for any feasible point $\bx \in (1-\alpha)\mathcal{X}$, we have $\bx + \mu \mathbb{B}_1^d \subseteq \mathcal{X}$. In particular, $\bx \pm \mu \bs^t \in \mathcal{X}$ for every $\bs^t \in \partial \mathbb{B}_1^d$.
\end{lemma}
\begin{proof}
    Since $\|\bz\|_p \le \|\bz\|_1$ for every $\bz \in \reals^d$ and $p \ge 1$, we have $\mathbb{B}_1^d \subseteq \mathbb{B}_p^d$. Therefore, it holds that
    \begin{align*}
        (1-\alpha)\mathcal{X} + \mu \mathbb{B}_1^d
        \subseteq (1-\alpha)\mathcal{X} + \mu \mathbb{B}_p^d
        \subseteq (1-\alpha)\mathcal{X} + \alpha r \mathbb{B}_p^d.
    \end{align*}
    By Assumption \ref{assumption.bounded region}, we have $r \mathbb{B}_p^d \subseteq \mathcal{X}$, which implies that $\alpha r \mathbb{B}_p^d \subseteq \alpha \mathcal{X}$. By convexity of $\mathcal{X}$, we have $ (1-\alpha)\mathcal{X} + \alpha \mathcal{X} \subseteq \mathcal{X}$. Combining the above inclusions yields $(1-\alpha)\mathcal{X} + \mu \mathbb{B}_1^d \subseteq \mathcal{X}$, which proves the claim.
\end{proof}

In the sequel, unless stated otherwise, we instantiate the shrinkage parameter as $\alpha = \mu / r$, which is the largest admissible choice allowed by Lemma \ref{lemma.relation between mu and alpha}. We now proceed to present the properties of the gradient estimator \eqref{eq.gradient estimator}, under the assumption that the loss functions are Lipschitz continuous with respect to the $\ell_q$-norm. This assumption generalizes the setting in \cite{zhao2021bandit} as a special case.

\begin{assumption}[$\ell_q$-norm Lipschitz Continuity]\label{assumption.G Lipschitz}
   All the functions are $G$-Lipschitz continuous with respect to $\ell_q$-norm over the feasible set $\mathcal{X}$, that is, for all $\bx, \by \in \mathcal{X}$, we have $\left|f_t(\bx)-f_t(\by)\right| \leq G\|\bx-\by\|_q, \ \forall t = 1, \ldots, T$.
\end{assumption}
\begin{lemma}[Lemmas 2 and 4 in \cite{akhavan2022gradient}]\label{lemma.properties of gradient estimator}
   Suppose Assumptions \ref{assumption.bounded region} and \ref{assumption.G Lipschitz} hold. Let $f^\mu_t (\by) \triangleq \mE_{\bs \sim \unif(\mathbb{B}_1^d)} \left[f_t(\by + \mu \bs)\right]$ be the smoothing function. Then for all $t=1, \ldots, T$, it holds that
   \begin{enumerate}
       \item $f^\mu_t(\cdot)$ is convex and satisfies $|f^\mu_t(\by)-f_t(\by)|\le \zeta_q(d)G\mu$ for every $\by$ such that $\by+\mu\mathbb{B}_1^d\subseteq\mathcal{X}$, where the constant $\zeta_q(d)$ is defined as
        \begin{align*}
           \zeta_q(d) \triangleq \begin{cases}
              qd^{\frac{1}{q}} / (d+1), \ &q \in [1,\log(d)), \\ 
               e \log(d) / (d+1), \ &q \ge \log(d);
           \end{cases}
        \end{align*} 
       \item  $f^\mu_t(\cdot)$ is differentiable, and satisfies $\nabla f^\mu_t(\by^t) = \mE_{\bs^t \sim \unif(\partial \mathbb{B}_1^d)} \left[\bg^t \mid \cF_{t-1} \right]$, where $\bg^t$ is defined in Equation \eqref{eq.gradient estimator};
       \item the second-order moment of gradient estimator \eqref{eq.gradient estimator} satisfies
        \begin{equation}\label{eq.varaince}
            \mE \left[\|\bg^t\|_{p^*}^2 \mid \cF_{t-1}\right] \le 12(1+\sqrt{2})^2G^2 \xi_{p,q}(d), \ \xi_{p,q}(d) \triangleq d^{1 + \frac{2}{\min\{q,2\}}- \frac{2}{p}}.
        \end{equation}
   \end{enumerate}
\end{lemma}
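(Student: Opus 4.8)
The plan is to handle the three claims by largely independent arguments. Claims~1 (convexity) and~2 (unbiasedness) are the ``soft'' parts. Convexity of $f^{\mu}_t(\by)=\mE_{\bs}[f_t(\by+\mu\bs)]$ is immediate: for each fixed $\bs$ the map $\by\mapsto f_t(\by+\mu\bs)$ is convex as a composition of the convex $f_t$ with an affine shift, and averaging over $\bs$ preserves convexity. For the unbiasedness I would start from the convolution form $f^{\mu}_t(\by)=\mathrm{vol}(\mu\mathbb{B}_1^d)^{-1}\int_{\by+\mu\mathbb{B}_1^d}f_t(\bw)\,\mathrm{d}\bw$, differentiate under the integral, and apply the divergence theorem to rewrite $\nabla f^{\mu}_t(\by)$ as a surface integral over $\mu\,\partial\mathbb{B}_1^d$ weighted by the outward normal $\sign(\bx)/\sqrt{d}$. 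The only genuine computation is the constant: since $\mathrm{vol}(\mathbb{B}_1^d)=2^d/d!$ and the surface area of $\partial\mathbb{B}_1^d$ is $2^d\sqrt{d}/(d-1)!$, the surface-to-volume ratio is $d^{3/2}$; dividing by the $\sqrt{d}$ from the normal and accounting for the $\mu^{-1}$ produced by rescaling the surface and volume measures gives exactly the prefactor $d/\mu$. A final antisymmetrization---replacing $f_t(\by+\mu\bs)$ by $\tfrac12\big(f_t(\by+\mu\bs)-f_t(\by-\mu\bs)\big)$, which is legitimate because $\bs$ is symmetrically distributed and $\sign(-\bs)=-\sign(\bs)$---recovers the estimator \eqref{eq.gradient estimator}.

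The approximation bound in Claim~1 reduces, via $|f^{\mu}_t(\by)-f_t(\by)|\le\mE_{\bs}|f_t(\by+\mu\bs)-f_t(\by)|\le\mu G\,\mE_{\bs\sim\unif(\mathbb{B}_1^d)}\|\bs\|_q$, to the $q$-th moment of a uniform point in the $\ell_1$-ball. I would compute this through the representation of the uniform law on the solid simplex by normalized i.i.d.\ exponentials, yielding the closed form $\mE\|\bs\|_q^q=d\,\Gamma(1+q)\Gamma(1+d)/\Gamma(1+q+d)$; then Jensen's inequality $\mE\|\bs\|_q\le(\mE\|\bs\|_q^q)^{1/q}$ together with the Gamma-ratio estimate $\Gamma(1+d)/\Gamma(1+q+d)\le(d+1)^{-q}$ and $\Gamma(1+q)\le q^q$ gives $\mE\|\bs\|_q\le q\,d^{1/q}/(d+1)$, the first branch of $\zeta_q(d)$. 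For $q\ge\log(d)$ I would use monotonicity of $\ell_q$-norms, $\|\bs\|_q\le\|\bs\|_{\log(d)}$, apply the first branch at exponent $\log(d)$, and simplify with $d^{1/\log(d)}=e$ to obtain $e\log(d)/(d+1)$.

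Claim~3 is where I expect the real difficulty. Writing $\|\bg_t\|_{p^*}^2=\frac{d^2}{4\mu^2}\big(f_t(\by_t+\mu\bs_t)-f_t(\by_t-\mu\bs_t)\big)^2\,\|\sign(\bs_t)\|_{p^*}^2$ and noting that $\|\sign(\bs_t)\|_{p^*}^2=d^{2/p^*}=d^{2-2/p}$ is deterministic, everything hinges on $\mE\big[(f_t(\by_t+\mu\bs_t)-f_t(\by_t-\mu\bs_t))^2\big]$. The naive route, bounding this difference by $2\mu G\|\bs_t\|_q$, is too lossy: even with a sharp estimate of $\mE\|\bs_t\|_q^2$ on the $\ell_1$-sphere it overshoots the target exponent by a full factor of $d$ (in the Euclidean case it produces $d^2$ where $\xi_{2,2}(d)=d$ is needed). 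The remedy, following the variance analysis of \citet{akhavan2022gradient}, is to exploit the \emph{directional} rather than worst-case size of the finite difference: expressing it as a line integral of the directional derivative $\langle\nabla f_t,\bs_t\rangle$ (valid almost everywhere by Rademacher's theorem) and using that on the $\ell_1$-sphere $\mE_{\bs}\langle g,\bs\rangle^2=2\|g\|_2^2/(d(d+1))$ recovers the missing factor $d$. The mismatch between the $\ell_{q^*}$-norm controlled by Lipschitzness and the $\ell_2$-norm appearing here is then reconciled by the conversion $\|g\|_2\le d^{\max\{0,\,1/q-1/2\}}\|g\|_{q^*}\le d^{\max\{0,\,1/q-1/2\}}G$; the truncation of this exponent at $q=2$ is exactly what surfaces as $\min\{q,2\}$, and combining with $d^{2-2/p}$ reproduces $\xi_{p,q}(d)=d^{1+2/\min\{q,2\}-2/p}$.

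The main obstacle, concretely, is controlling $\mE_{\bs}\langle\nabla f_t(\by_t+t\mu\bs),\bs\rangle^2$ when only convexity and $\ell_q$-Lipschitzness---not smoothness---are available, since the gradient inside the directional derivative itself varies with $\bs$ and cannot simply be pulled out of the expectation. This is precisely the delicate point resolved by the argument of \citet{akhavan2022gradient}, which I would either cite directly or reproduce in full; the explicit constant $12(1+\sqrt{2})^2$ is an artifact of that estimate and I would not attempt to optimize it.
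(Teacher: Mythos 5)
This lemma is imported verbatim from \citet{akhavan2022gradient} (as the bracketed attribution indicates), so the paper itself offers no proof to compare against --- your reconstruction is therefore doing strictly more work than the source text, and it is sound where it is self-contained. Your arguments for claims 1 and 2 are correct and match the cited proofs: the Beta/Dirichlet moment identity $\mE\|\bs\|_q^q = d\,\Gamma(1+q)\Gamma(1+d)/\Gamma(1+q+d)$ for $\bs \sim \unif(\mathbb{B}_1^d)$, the $\|\cdot\|_q \le \|\cdot\|_{\log(d)}$ reduction with $d^{1/\log(d)}=e$ for the second branch of $\zeta_q(d)$, and the surface-to-volume bookkeeping (ratio $d^{3/2}$, normal $\sign(\bx)/\sqrt{d}$, rescaling $\mu^{d-1}/\mu^{d}$) yielding exactly the $d/(2\mu)$ prefactor after antisymmetrization. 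For claim 3, your diagnosis is also right, including the exponent accounting: $\|\sign(\bs_t)\|_{p^*}^2 = d^{2-2/p}$, the identity $\mE\langle \bg,\bs\rangle^2 = 2\|\bg\|_2^2/(d(d+1))$ on $\partial\mathbb{B}_1^d$, and the norm conversion $\|\bg\|_2 \le d^{\max\{0,1/q-1/2\}}\|\bg\|_{q^*}$ do reproduce $\xi_{p,q}(d)=d^{1+2/\min\{q,2\}-2/p}$, and the naive Lipschitz bound does overshoot by a factor of $d$. The one substantive divergence is mechanical: the actual proof in \citet{akhavan2022gradient} does not proceed via line integrals of directional derivatives (which, as you correctly observe, founders on the $\bs$-dependence of $\nabla f_t$ and on nonsmoothness); instead it applies a Poincar\'e-type inequality for the uniform measure on the $\ell_1$-sphere --- inherited from the Bobkov--Ledoux Poincar\'e inequality for the exponential distribution, which is the origin of the $(1+\sqrt{2})^2$ constant --- to the odd (hence mean-zero) Lipschitz function $\bs \mapsto f_t(\by_t+\mu\bs)-f_t(\by_t-\mu\bs)$. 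This measure-level argument is precisely what circumvents the obstacle you named. Since you explicitly flag that point and defer to (or would reproduce) the cited argument rather than claiming your heuristic closes it, there is no gap; your proposal is consistent with the paper, which likewise resolves the hard step by citation.
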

\begin{remark}
    We do not adopt the widely used $\ell_2$-sphere smoothing technique to construct the gradient estimator \citep{agarwal2010optimal,ghadimi2013stochastic,duchi2015optimal,shamir2017optimal,gao2018information}, as its second-order moment depends on $\ell_2$-norm Lipschitz continuity. When considering non-Euclidean structures, transforming $\ell_q$-norm Lipschitz continuity to the Euclidean norm requires using norm equivalences (see Lemma \ref{lemma.vector norm}), which introduces a large dependence on the dimension.
\end{remark}

\paragraph{Exploitation.} The exploitation strategy follows the Bandit Mirror Descent (\texttt{BMD}) (Line \ref{line:exploitation} in Algorithm \ref{alg.BMD}); the next iterate $\by^{t+1}$ is obtained by seeking the minimizer within the shrunk set $\mathcal{X}_{\alpha}$.
This shrinking step ensures that the sampling procedure at iteration $t+1$ is legal, provided that the choices of $\mu$ and $\alpha$ satisfy the condition specified in Lemma \ref{lemma.relation between mu and alpha}.

\begin{algorithm}[h]
  \caption{Bandit Mirror Descent (\texttt{BMD})}
  \label{alg.BMD}
  \begin{algorithmic}[1]
    \item \textbf{Input:} smoothing parameter $\mu>0$, shrinkage parameter $\alpha>0$, step size $\eta>0$, divergence-generating function $\psi$
    \item \textbf{Initialize:} pick $\by^{1}\in(1-\alpha)\mathcal{X}$
    \item \textbf{For} $t = 1,2,\ldots,T$ \textbf{do}
    \item \quad Generate $\bs^t \sim \unif(\partial \mathbb{B}_1^d)$, and play decisions
            \begin{align*}
                \bx^{t,+} = \by^t + \mu \bs^t, \quad \bx^{t,-} = \by^t - \mu \bs^t;
            \end{align*}
    \item \quad Receive $f_t(\bx^{t,+})$, $f_t(\bx^{t,-})$ as feedback and construct the estimator:\label{line:exploration}
            \begin{align*}
                \bg^t = \frac{d}{2\mu}\left(f_t(\bx^{t,+}) - f_t(\bx^{t,-})\right) \cdot \sign(\bs^t);
            \end{align*}
    \item \quad Update the iterate: \label{line:exploitation}
            \begin{align*}
                \by^{t+1} = \argmin_{\by \in (1-\alpha)\mathcal{X}} \inner{\bg^t}{\by} + \frac{1}{\eta} B_{\psi}(\by;\by^t);
            \end{align*}
    \item \textbf{end For}
  \end{algorithmic}
\end{algorithm}

Before delving into the dynamic regret analysis, we provide a preliminary investigation of the expected dynamic regret. This helps identify the crucial term related to \texttt{BMD} and distinguish it from trivial terms, which is a common approach in regret analysis \citep{saha2011improved,dekel2015bandit,zhao2021bandit}. Let $\bv^t \triangleq (1-\alpha)\bu^t \in \mathcal{X}_{\alpha}$, $t = 1, \ldots, T$ be the scaled comparator sequence and take expectation with respect to all randomness $\bs^1, \ldots, \bs^T$. We decompose the expected dynamic regret of loss functions into three terms:
\begin{equation}\label{eq.three terms decomposition}
\begin{aligned}
 &\mE \left[\sum_{t=1}^T \frac{f_t(\bx^{t,+}) + f_t(\bx^{t,-})}{2} - f_t(\bu^t)\right] \\
   \le \  &\underbrace{\mE\left[\sum_{t=1}^T f_t(\by^t) - f_t(\bv^t)\right]}_{\texttt{term (a)}} + \underbrace{\mE\left[\sum_{t=1}^T \frac{f_t(\bx^{t,+}) + f_t(\bx^{t,-})}{2} - f_t(\by^t)\right]}_{\texttt{term (b)}} + \underbrace{\mE \left[\sum_{t=1}^T f_t(\bv^t) - f_t(\bu^t)\right]}_{\texttt{term (c)}}.
\end{aligned}
\end{equation}
Since loss functions are $G$-Lipschitz continuous with respect to $\ell_q$-norm over domain $\mathcal{X}$, then both \texttt{term (b)} and \texttt{term (c)} can be upper bounded as follows.
\begin{lemma}\label{lemma.bound term b and c}
    Suppose Assumptions \ref{assumption.bounded region} and \ref{assumption.G Lipschitz} hold, and let $\alpha = \mu/r$. It holds that
    \begin{align*}
           \texttt{term (b)} &\le G T\mu, \\
           \texttt{term (c)} &\le \frac{G R \upsilon_{p,q}(d) T\mu}{r},
    \end{align*}
    where the constant $\upsilon_{p,q}(d) \triangleq d^{1/q - 1/\max\{q,p\}}$.
\end{lemma}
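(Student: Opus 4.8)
The plan is to bound both quantities by direct application of $\ell_q$-Lipschitz continuity (Assumption \ref{assumption.G Lipschitz}), converting the resulting norm estimates into the quantities in the statement; the expectation plays no essential role since the pointwise bounds hold for every realization of $\bs_t$ (and \texttt{term (c)} involves only the deterministic comparators).

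For \texttt{term (b)}, I would first split the summand into a symmetric pair of increments,
\begin{equation*}
\frac{f_t(\bx_t^+) + f_t(\bx_t^-)}{2} - f_t(\by_t) = \frac{\big(f_t(\by_t + \mu\bs_t) - f_t(\by_t)\big) + \big(f_t(\by_t - \mu\bs_t) - f_t(\by_t)\big)}{2},
\end{equation*}
and bound each increment by $G\|\mu\bs_t\|_q = G\mu\|\bs_t\|_q$ using Assumption \ref{assumption.G Lipschitz}. Since $\bs_t \in \partial\mathbb{B}_1^d$ we have $\|\bs_t\|_1 = 1$, and monotonicity of $\ell_p$-norms in $p$ gives $\|\bs_t\|_q \le \|\bs_t\|_1 = 1$ for every $q \ge 1$. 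Hence each summand is at most $G\mu$ in absolute value, and summing over $t$ yields $\texttt{term (b)} \le GT\mu$.

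For \texttt{term (c)}, the summand is $f_t(\bv_t) - f_t(\bu_t)$ with $\bv_t = (1-\alpha)\bu_t$, so $\bv_t - \bu_t = -\alpha\bu_t$ and Lipschitz continuity gives $|f_t(\bv_t) - f_t(\bu_t)| \le G\alpha\|\bu_t\|_q$. I would then convert the $\ell_q$-norm of the comparator to its $\ell_p$-radius via the standard norm-equivalence inequality $\|\bu_t\|_q \le d^{\max\{0,\,1/q - 1/p\}}\|\bu_t\|_p$, and use $\bu_t \in \mathcal{X} \subseteq R\mathbb{B}_p^d$ to get $\|\bu_t\|_p \le R$. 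Substituting the calibration $\alpha = \mu d^{1-1/p}/r$ from Lemma \ref{lemma.relation between mu and alpha} then bounds each summand by $(GR\mu/r)\,d^{\,1 - 1/p + \max\{0,\,1/q - 1/p\}}$, and summing over $t$ produces the claimed bound once the exponent is simplified.

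The only delicate point is to verify that the exponent collapses exactly to the constant $\upsilon_{p,q}(d) = d^{1 + 1/q - 1/p - 1/\max\{q,p\}}$. I would do this by splitting on $\max\{q,p\}$: when $q \ge p$ the factor $\max\{0, 1/q-1/p\}$ vanishes and the exponent reduces to $1 - 1/p = 1 + 1/q - 1/p - 1/q$, while when $q < p$ it equals $1/q - 1/p$ and the exponent becomes $1 + 1/q - 2/p = 1 + 1/q - 1/p - 1/p$. In both cases the exponent agrees with $1 + 1/q - 1/p - 1/\max\{q,p\}$, confirming the constant. This bookkeeping of dimension-dependent exponents — correctly combining the norm-conversion factor with the $\mu$--$\alpha$ calibration — is the main (though modest) obstacle; the Lipschitz estimates themselves are routine.
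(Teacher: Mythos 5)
Your proof is correct and takes essentially the same route as the paper's: Lipschitz continuity plus $\|\bs_t\|_q \le \|\bs_t\|_1 = 1$ for \texttt{term (b)}, and Lipschitz continuity with the calibration $\mu d^{1-1/p} = \alpha r$ and the norm-equivalence bound for \texttt{term (c)}. Your exponent $d^{\max\{0,\,1/q-1/p\}}$ is exactly the paper's $d^{1/q - 1/\max\{q,p\}}$ from Lemma \ref{lemma.vector norm}, and your case analysis verifying the constant $\upsilon_{p,q}(d)$ is sound.
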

\begin{proof}
    Due to the Lipschitz continuity and $\bx^{t,\pm} = \by^t \pm \mu \bs^t$, then for all $t = 1, \ldots, T$, we have
    \begin{align*}
        \left|\frac{f_t(\bx^{t,+}) + f_t(\bx^{t,-})}{2} - f_t(\by^t)\right|
        \le \frac{G}{2}\big(\|\bx^{t,+} - \by^t\|_q + \|\bx^{t,-} - \by^t\|_q\big)
        = G\mu \|\bs^t\|_q
        \le G\mu \|\bs^t\|_1
        = G\mu,
    \end{align*}
    where the last equality holds because $\bs^t \in \partial \mathbb{B}_1^d$. Summing over $t$ immediately gives $\texttt{term (b)} \le G T\mu$. Similarly, using the definition $\bv^t = (1-\alpha)\bu^t$ yields
    \begin{align*}
        \left|f_t(\bv^t) - f_t(\bu^t)\right|
        \le G\|\bv^t - \bu^t\|_q
        = G\alpha \|\bu^t\|_q
        = \frac{G\mu}{r}\|\bu^t\|_q.
    \end{align*}
    By Lemma \ref{lemma.vector norm}, we obtain
    \begin{align*}
        \|\bu^t\|_q \le d^{1/q - 1/\max\{q,p\}}\|\bu^t\|_p = \upsilon_{p,q}(d)\|\bu^t\|_p.
    \end{align*}
    Since $\mathcal{X} \subseteq R \mathbb{B}_p^d$, we have $\|\bu^t\|_p \le R$ and therefore
    \begin{align*}
        \left|f_t(\bv^t) - f_t(\bu^t)\right|
        \le \frac{G R \upsilon_{p,q}(d)\mu}{r}.
    \end{align*}
    Summing over $t$ proves $\texttt{term (c)} \le G R \upsilon_{p,q}(d)T\mu/r$.
\end{proof}

Regarding \texttt{term (a)}, we take the smoothing functions $f^\mu_t$, $t = 1, \ldots, T$ defined in Lemma \ref{lemma.properties of gradient estimator} as surrogates. This yields the following bound
\begin{equation}\label{eq.bound term a}
    \texttt{term (a)} \le \underbrace{\mE\left[\sum_{t=1}^T f^\mu_t(\by^t) - f^\mu_t(\bv^t)\right]}_{\texttt{term (d)}} + 2G \zeta_{q}(d) T\mu.
\end{equation}
Consequently, the analysis of the expected dynamic regret for \texttt{BMD} reduces to bounding the crucial \texttt{term (d)}. To achieve this, the following lemma provides a fundamental analysis of the mirror descent update.

\begin{lemma}\label{lemma.pathwise mirror descent}
    Suppose Assumption \ref{assumption.bounded region} holds. Let $\psi: \operatorname{int}(\mathcal{X}) \rightarrow \reals$ be a divergence-generating function that is $\lambda$-strongly convex with respect to the $\ell_p$-norm ($\lambda > 0$). Let $\{\by^t\}_{t=1}^{T+1}$ be generated by the mirror descent update in Algorithm \ref{alg.BMD} with an arbitrary sequence of vectors $\bg^1,\ldots,\bg^T \in \reals^d$. Then for every comparator sequence $\bv^1,\ldots,\bv^T \in \mathcal{X}_{\alpha}$,
    \begin{equation*}
        \sum_{t=1}^T \inner{\bg^t}{\by^t - \bv^t}
        \le \frac{F_\psi^\alpha + B_{\psi}(\bv^1;\by^1)}{\eta}
        + \frac{G_\psi^\alpha}{\eta}\sum_{t=1}^{T-1}\|\bv^{t+1} - \bv^t\|_p
        + \frac{\eta}{2\lambda}\sum_{t=1}^T \|\bg^t\|_{p^*}^2.
    \end{equation*}
\end{lemma}
\begin{proof}
    Since $0 \in \operatorname{int}(\mathcal{X})$ and $\alpha \in (0,1)$, convexity implies $(1-\alpha)\mathcal{X} \subseteq \operatorname{int}(\mathcal{X})$. Hence every iterate $\by^t \in \mathcal{X}_{\alpha}$ lies in the domain where $\nabla \psi$ is well defined. Fix an arbitrary realization of $\bg^1,\ldots,\bg^T$. From the optimality condition for the mirror descent update (Line \ref{line:exploitation} in Algorithm \ref{alg.BMD}), we have
    \begin{align*}
        \inner{\eta \bg^t + \nabla \psi(\by^{t+1}) - \nabla \psi(\by^t)}{\bv - \by^{t+1}} \ge 0,
        \quad \forall \bv \in \mathcal{X}_{\alpha}.
    \end{align*}
    Taking $\bv = \bv^t$ and rearranging gives
    \begin{align*}
        \eta \inner{\bg^t}{\by^t - \bv^t}
        \le \inner{\nabla \psi(\by^{t+1}) - \nabla \psi(\by^t)}{\bv^t - \by^{t+1}}
        + \eta \inner{\bg^t}{\by^t - \by^{t+1}}.
    \end{align*}
    By the three-point identity (take $\bx = \by^{t+1}$, $\by = \by^t$, and $\bz = \bv^t$ in Lemma \ref{lemma.three point bregman}), we have
    \begin{align*}
        \eta \inner{\bg^t}{\by^t - \bv^t}
        \le \ &B_{\psi}(\bv^t;\by^t) - B_{\psi}(\bv^t;\by^{t+1}) - B_{\psi}(\by^{t+1};\by^t) + \eta \inner{\bg^t}{\by^t - \by^{t+1}} \\
        \le \ &B_{\psi}(\bv^t;\by^t) - B_{\psi}(\bv^t;\by^{t+1}) - \frac{\lambda}{2}\|\by^{t+1} - \by^t\|_p^2 + \eta \|\bg^t\|_{p^*}\|\by^t - \by^{t+1}\|_p \\
        \le \ &B_{\psi}(\bv^t;\by^t) - B_{\psi}(\bv^t;\by^{t+1}) + \frac{\eta^2}{2\lambda}\|\bg^t\|_{p^*}^2,
    \end{align*}
    where the second inequality uses $\lambda$-strong convexity of $\psi$, and the last inequality is Young's inequality. Summing over $t=1,\ldots,T$ yields
    \begin{equation}\label{eq:middle step of MD}
        \sum_{t=1}^T \inner{\bg^t}{\by^t - \bv^t}
        \le \frac{1}{\eta}\sum_{t=1}^T \big(B_{\psi}(\bv^t;\by^t) - B_{\psi}(\bv^t;\by^{t+1})\big)
        + \frac{\eta}{2\lambda}\sum_{t=1}^T \|\bg^t\|_{p^*}^2.
    \end{equation}
    The Bregman terms telescope as
    \begin{align*}
        \sum_{t=1}^T \big(B_{\psi}(\bv^t;\by^t) - B_{\psi}(\bv^t;\by^{t+1})\big)
        = \ &B_{\psi}(\bv^1;\by^1) - B_{\psi}(\bv^T;\by^{T+1}) + \sum_{t=1}^{T-1}\big(B_{\psi}(\bv^{t+1};\by^{t+1}) - B_{\psi}(\bv^t;\by^{t+1})\big).
    \end{align*}
    For each $t=1,\ldots,T-1$, the definition of Bregman divergence gives
    \begin{align*}
        B_{\psi}(\bv^{t+1};\by^{t+1}) - B_{\psi}(\bv^t;\by^{t+1})
        = \psi(\bv^{t+1}) - \psi(\bv^t) - \inner{\nabla \psi(\by^{t+1})}{\bv^{t+1} - \bv^t}.
    \end{align*}
    Therefore, we have
    \begin{align*}
        B_{\psi}(\bv^{t+1};\by^{t+1}) - B_{\psi}(\bv^t;\by^{t+1})
        \le \psi(\bv^{t+1}) - \psi(\bv^t) + G_\psi^\alpha\|\bv^{t+1} - \bv^t\|_p.
    \end{align*}
    Since $B_{\psi}(\bv^T;\by^{T+1}) \ge 0$ and $\psi(\bv^T) - \psi(\bv^1) \le F_\psi^\alpha$, we obtain
    \begin{align*}
        \sum_{t=1}^T \big(B_{\psi}(\bv^t;\by^t) - B_{\psi}(\bv^t;\by^{t+1})\big)
        \le F_\psi^\alpha + B_{\psi}(\bv^1;\by^1) + G_\psi^\alpha\sum_{t=1}^{T-1}\|\bv^{t+1} - \bv^t\|_p.
    \end{align*}
    Substituting the above inequality into the previous bound \eqref{eq:middle step of MD} proves the result.
\end{proof}

In previous work, \citet{zhao2021bandit} leveraged the fact that the gradient estimator \eqref{eq.gradient estimator}\footnote{\cite{zhao2021bandit} used the $\ell_2$-sphere smoothing technique, but the gradient estimator can be similarly bounded.} is almost surely bounded as follows:
\begin{equation}\label{eq.direct boundness of gradient estimator}
    \begin{aligned}
     \|\bg^t\|_{p^*} = \ &\frac{d}{2\mu}\left|f_t(\by^t + \mu \bs^t) - f_t(\by^t - \mu \bs^t)\right| \cdot \|\sign(\bs^t)\|_{p^*} \\
     \le \ &dG \|\bs^t\|_{q} \cdot \|\sign(\bs^t)\|_{p^*} \\
     = \ &\mathcal{O}(d\|\sign(\bs^t)\|_{p^*}).
    \end{aligned}
\end{equation}
By selecting appropriate surrogates, bandit gradient descent can then be interpreted as a randomized online gradient descent applied to these surrogates. Hence, they referenced existing dynamic regret bounds for the online gradient descent algorithm \citep{zinkevich2003online}. Certainly, we can also follow this approach by using the result of online mirror descent, but it leads to a large dependence on the dimension. For example, when $p = 2$, we have $\|\bg^t\|_2 = \mathcal{O}(d^{3/2})$, whereas the optimal dependence on dimension is $\mathcal{O}(\sqrt{d})$. The key point here is the incorporation of the second-order moment bound \eqref{eq.varaince} of the gradient estimator, which is also the insight behind the optimal bandit algorithm in the static regret setting \citep{shamir2017optimal}. 

\begin{lemma}\label{lemma.bound term d}
    Suppose Assumptions \ref{assumption.bounded region} and \ref{assumption.G Lipschitz} hold. Let $\psi: \operatorname{int}(\mathcal{X}) \rightarrow \reals$ be a divergence-generating function that is $\lambda$-strongly convex with respect to the $\ell_p$-norm ($\lambda > 0$). Then for any step size $\eta > 0$, the \texttt{term (d)} can be upper bounded by
    \begin{equation*}
        \mE\left[\sum_{t=1}^T f^\mu_t(\by^t) - f^\mu_t(\bv^t)\right]
        \le \frac{F_\psi^\alpha + B_{\psi}(\bv^{1};\by^{1})}{\eta} + \frac{G_\psi^\alpha \mathcal{P}_{T,p}}{\eta} + \frac{6(1+\sqrt{2})^2 G^2 \xi_{p,q}(d) T}{\lambda}\eta.
    \end{equation*}
    The expectation here is taken with respect to all randomness $\bs^1, \ldots, \bs^T$.
\end{lemma}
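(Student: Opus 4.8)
The plan is to run the standard mirror-descent analysis on the surrogate losses $f_t^{\mu}$, exploiting that the estimator $\bg_t$ is conditionally unbiased for $\nabla f_t^{\mu}(\by_t)$, and then to control the moving comparator $\bv_t$ through a comparator-shift telescoping argument. First, since each $f_t^{\mu}$ is convex (part 1 of Lemma \ref{lemma.properties of gradient estimator}), I would linearize
$$f_t^{\mu}(\by_t) - f_t^{\mu}(\bv_t) \le \inner{\nabla f_t^{\mu}(\by_t)}{\by_t - \bv_t}.$$
Because $\by_t$ is $\F_{t-1}$-measurable and $\bv_t=(1-\alpha)\bu_t$ is deterministic, part 2 of Lemma \ref{lemma.properties of gradient estimator} gives $\mE[\inner{\bg_t}{\by_t - \bv_t}\mid\F_{t-1}] = \inner{\nabla f_t^{\mu}(\by_t)}{\by_t - \bv_t}$, so taking total expectation and the tower property reduces \texttt{term (d)} to bounding $\mE[\sum_t \inner{\bg_t}{\by_t - \bv_t}]$, the linearized regret of \texttt{BMD} against the moving comparator.

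Next, I would derive the one-step mirror-descent inequality. From the optimality condition of the update in Line \ref{line:exploitation}, for every $\bv \in \mathcal{X}_{\alpha}$ we have $\inner{\eta\bg_t + \nabla\psi(\by_{t+1}) - \nabla\psi(\by_t)}{\bv - \by_{t+1}} \ge 0$; combined with the three-point identity $\inner{\nabla\psi(\by_{t+1}) - \nabla\psi(\by_t)}{\bv - \by_{t+1}} = B_{\psi}(\bv;\by_t) - B_{\psi}(\bv;\by_{t+1}) - B_{\psi}(\by_{t+1};\by_t)$ this yields $\eta\inner{\bg_t}{\by_{t+1} - \bv} \le B_{\psi}(\bv;\by_t) - B_{\psi}(\bv;\by_{t+1}) - B_{\psi}(\by_{t+1};\by_t)$. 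Splitting $\inner{\bg_t}{\by_t - \bv} = \inner{\bg_t}{\by_t - \by_{t+1}} + \inner{\bg_t}{\by_{t+1} - \bv}$ and absorbing the cross term via Hölder's inequality together with $\lambda$-strong convexity (so that $B_{\psi}(\by_{t+1};\by_t) \ge \tfrac{\lambda}{2}\|\by_{t+1}-\by_t\|_p^2$ and $\eta\inner{\bg_t}{\by_t-\by_{t+1}} \le \tfrac{\eta^2}{2\lambda}\|\bg_t\|_{p^*}^2 + \tfrac{\lambda}{2}\|\by_t - \by_{t+1}\|_p^2$) gives, after dividing by $\eta$ and setting $\bv = \bv_t$,
$$\inner{\bg_t}{\by_t - \bv_t} \le \frac{1}{\eta}\big(B_{\psi}(\bv_t;\by_t) - B_{\psi}(\bv_t;\by_{t+1})\big) + \frac{\eta}{2\lambda}\|\bg_t\|_{p^*}^2.$$

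The main obstacle is that summing the Bregman differences no longer telescopes, because the comparator $\bv_t$ changes with $t$. I would handle this by reindexing and discarding the nonnegative end term, writing $\sum_{t=1}^T (B_{\psi}(\bv_t;\by_t) - B_{\psi}(\bv_t;\by_{t+1})) \le B_{\psi}(\bv_1;\by_1) + \sum_{t=2}^T (B_{\psi}(\bv_t;\by_t) - B_{\psi}(\bv_{t-1};\by_t))$. Expanding the definition of the Bregman divergence gives $B_{\psi}(\bv_t;\by_t) - B_{\psi}(\bv_{t-1};\by_t) = \psi(\bv_t) - \psi(\bv_{t-1}) - \inner{\nabla\psi(\by_t)}{\bv_t - \bv_{t-1}}$; the $\psi$-difference telescopes to at most $F_{\psi}$, while the inner-product term is bounded by $G_{\psi}\|\bv_t - \bv_{t-1}\|_p$ via Hölder and the definition $G_{\psi} = \max_t\|\nabla\psi(\by_t)\|_{p^*}$. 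Since $\bv_t = (1-\alpha)\bu_t$ we have $\|\bv_t - \bv_{t-1}\|_p \le \|\bu_t - \bu_{t-1}\|_p$, so these terms sum to at most $G_{\psi}\mathcal{P}_{T,p}$, and the whole Bregman sum is controlled by $F_{\psi} + B_{\psi}(\bv_1;\by_1) + G_{\psi}\mathcal{P}_{T,p}$.

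Finally, I would assemble the pieces: summing the one-step inequality over $t$, inserting the comparator-shift bound, and taking total expectation gives $\mE[\sum_t \inner{\bg_t}{\by_t - \bv_t}] \le \tfrac{1}{\eta}(F_{\psi} + B_{\psi}(\bv_1;\by_1) + G_{\psi}\mathcal{P}_{T,p}) + \tfrac{\eta}{2\lambda}\mE[\sum_t\|\bg_t\|_{p^*}^2]$. Applying the second-order moment bound \eqref{eq.varaince} through the tower property, $\mE[\|\bg_t\|_{p^*}^2] \le 12(1+\sqrt 2)^2 G^2\xi_{p,q}(d)$, turns the last sum into $\tfrac{6(1+\sqrt 2)^2 G^2\xi_{p,q}(d)T}{\lambda}\eta$, which combined with the reduction from the first paragraph yields exactly the claimed bound. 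The only subtlety to verify is that $\bv_t \in \mathcal{X}_{\alpha}$, so that it is a legitimate competitor in the optimality condition; this holds by construction since $\bu_t \in \mathcal{X}$.
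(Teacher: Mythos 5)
Your proposal is correct and follows essentially the same route as the paper's proof: the optimality condition of the mirror step combined with the three-point identity, $\lambda$-strong convexity with the Hölder--Young splitting, conditional unbiasedness of $\bg_t$ to pass to $f_t^{\mu}$, and the second-order moment bound \eqref{eq.varaince} to obtain the $\eta$-term. The only (immaterial) difference is bookkeeping in the non-telescoping sum: you shift the comparator in the first argument of the Bregman divergence and stay within $t \le T$, whereas the paper expands $B_{\psi}(\bv_t;\by_t) - B_{\psi}(\bv_t;\by_{t+1})$ into $\psi$-values and inner products and performs an Abel summation involving boundary terms at $t = T+1$; both yield $F_{\psi} + B_{\psi}(\bv_1;\by_1) + G_{\psi}\mathcal{P}_{T,p}$.
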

\begin{proof}
    For each $t$, the iterate $\by^t$ is $\cF_{t-1}$-measurable. First note that $\mE[\bg^t \mid \cF_{t-1}] = \nabla f^\mu_t(\by^t)$ due to Lemma \ref{lemma.properties of gradient estimator}. Since $f^\mu_t$ is convex, we have
    \begin{align*}
        f^\mu_t(\by^t) - f^\mu_t(\bv^t)
        \le \inner{\nabla f^\mu_t(\by^t)}{\by^t - \bv^t}
        = \mE\left[\inner{\bg^t}{\by^t - \bv^t}\mid \cF_{t-1}\right].
    \end{align*}
    Taking total expectation and summing over $t$ yields
    \begin{align*}
        \mE\left[\sum_{t=1}^T f^\mu_t(\by^t) - f^\mu_t(\bv^t)\right]
        \le \mE\left[\sum_{t=1}^T \inner{\bg^t}{\by^t - \bv^t}\right].
    \end{align*}
    Now apply Lemma \ref{lemma.pathwise mirror descent} to the realized gradient estimator sequence $\bg^1,\ldots,\bg^T$ and the comparator sequence $\{\bv^t\}_{t=1}^T$. Taking expectation gives
    \begin{align*}
        \mE\left[\sum_{t=1}^T f^\mu_t(\by^t) - f^\mu_t(\bv^t)\right]
        \le \frac{F_\psi^\alpha + B_{\psi}(\bv^1;\by^1)}{\eta}
        + \frac{G_\psi^\alpha}{\eta}\sum_{t=1}^{T-1}\|\bv^{t+1} - \bv^t\|_p
        + \frac{\eta}{2\lambda}\sum_{t=1}^T \mE\|\bg^t\|_{p^*}^2.
    \end{align*}
    Since $\bv^t = (1-\alpha)\bu^t$, we have
    \[
       \sum_{t=1}^{T-1}\|\bv^{t+1} - \bv^t\|_p
       = (1-\alpha)\sum_{t=1}^{T-1}\|\bu^{t+1} - \bu^t\|_p
       \le \mathcal{P}_{T,p}.
    \]
    Moreover, by \eqref{eq.varaince}, it holds that
    \[
       \mE\|\bg^t\|_{p^*}^2
       = \mE\left[\mE\left[\|\bg^t\|_{p^*}^2 \mid \cF_{t-1}\right]\right]
       \le 12(1+\sqrt{2})^2 G^2 \xi_{p,q}(d).
    \]
    Substituting the last two inequalities completes the proof.
\end{proof}

Fix the initialization $\by^1 \in \mathcal{X}_{\alpha}$. We introduce the following constant $A_\psi^\alpha$ satisfying
\begin{equation}\label{eq.definition A psi}
    A_\psi^\alpha \ge F_\psi^\alpha + \sup_{\bv \in \mathcal{X}_{\alpha}} B_{\psi}(\bv;\by^1) + \max\{1, \lambda\}R^2.
\end{equation}
Note that $A_\psi^\alpha$ can be chosen explicitly from the geometry of $\mathcal{X}$ and the regularizer $\psi$. Now let us assume temporarily that the path variation $\mathcal{P}_{T,p}$ is known. In view of Lemma \ref{lemma.bound term d}, the natural step size is
\begin{equation}\label{eq.optimal step size}
     \eta_\star = \sqrt{\frac{A_\psi^\alpha + G_\psi^\alpha \mathcal{P}_{T,p}}{6(1+\sqrt{2})^2 G^2 \xi_{p,q}(d)T / \lambda}}.
\end{equation}
Let $c_{\mu}(p,q,d) \triangleq 1 + 2\zeta_q(d) + \frac{R}{r}\upsilon_{p,q}(d)$. By properly setting the smoothing parameter, the dynamic regret bounds for \texttt{BMD} algorithm can be derived. The results are summarized below, following from the combination of the three-term decomposition \eqref{eq.three terms decomposition} with Lemmas \ref{lemma.bound term b and c} and \ref{lemma.bound term d}.
\begin{theorem}\label{thm.dynamic regret of BMD}
      Suppose Assumptions \ref{assumption.bounded region} and \ref{assumption.G Lipschitz} hold, and the path variation $\mathcal{P}_{T,p}$ is known. Let $\psi: \operatorname{int}(\mathcal{X}) \rightarrow \reals$ be a divergence-generating function that is $\lambda$-strongly convex with respect to the $\ell_p$-norm ($\lambda > 0$). Choose
     \begin{align*}
        \eta &= \eta_\star = \sqrt{\frac{A_\psi^\alpha + G_\psi^\alpha \mathcal{P}_{T,p}}{6(1+\sqrt{2})^2 G^2 \xi_{p,q}(d)T / \lambda}}, \\
        \mu &= \min\left\{\frac{R\sqrt{\xi_{p,q}(d)}}{\sqrt{\lambda T} c_{\mu}(p,q,d)},\ \frac{r}{2}\right\},
     \end{align*}
     and set $\alpha = \mu/r$. Then the expected dynamic regret of \texttt{BMD} satisfies
     \begin{align*}
        \mE \left[\sum_{t=1}^T \frac{f_t(\bx^{t,+}) + f_t(\bx^{t,-})}{2} - f_t(\bu^t)\right]
        = \mathcal{O}\left(G  \sqrt{\left(A_\psi^\alpha + G_\psi^\alpha \mathcal{P}_{T,p}\right)\xi_{p,q}(d) T  / \lambda}\right),
     \end{align*}
     where the expectation is taken with respect to all randomness $\bs^1, \ldots, \bs^T$.
\end{theorem}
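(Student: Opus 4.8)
The plan is to treat the statement as an assembly result: the expected dynamic regret has already been split in \eqref{eq.three terms decomposition} into $\texttt{term (a)}+\texttt{term (b)}+\texttt{term (c)}$, and the two preceding lemmas control each piece, so the only genuine work is to verify that the prescribed $\eta$ and $\mu$ make every contribution share the same order. I would first dispose of the trivial pieces. Lemma \ref{lemma.bound term b and c} gives $\texttt{term (b)}\le GT\mu$ and $\texttt{term (c)}\le GR\upsilon_{p,q}(d)T\mu/r$. For $\texttt{term (a)}$ I would invoke the smoothing reduction \eqref{eq.bound term a}, which replaces the true losses by their smoothed surrogates $f_t^\mu$ at the additive cost $2G\zeta_q(d)T\mu$, leaving $\texttt{term (d)}$ as the sole nontrivial quantity.

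Next I would bound $\texttt{term (d)}$ through Lemma \ref{lemma.bound term d}, namely
\[
\texttt{term (d)} \le \frac{F_\psi + B_\psi(\bv_1;\by_1) + G_\psi \mathcal{P}_{T,p}}{\eta} + \frac{6(1+\sqrt{2})^2 G^2 \xi_{p,q}(d)T}{\lambda}\,\eta,
\]
and substitute the balancing step size $\eta_*$ from \eqref{eq.optimal step size}, which equalizes the two summands and yields
\[
\texttt{term (d)} = \mathcal{O}\!\left(G\sqrt{\big(F_\psi + B_\psi(\bv_1;\by_1) + G_\psi \mathcal{P}_{T,p}\big)\,\xi_{p,q}(d)\,T/\lambda}\right).
\]
This already matches the target order, so what remains is to confirm that the smoothing residuals do not dominate. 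I would collect all the $\mu$-dependent terms,
\[
GT\mu + GR\upsilon_{p,q}(d)T\mu/r + 2G\zeta_q(d)T\mu \ = \ \mathcal{O}\!\left(GT\mu\,(1 + \upsilon_{p,q}(d) + \zeta_q(d))\,R/r\right),
\]
using $R/r\ge 1$ (from Assumption \ref{assumption.bounded region}) to absorb the factor-free terms. Inserting the prescribed $\mu$ cancels the factor $(1 + \upsilon_{p,q}(d) + \zeta_q(d))R/r$ exactly and collapses this aggregate to $\mathcal{O}(G\sqrt{(F_\psi + B_\psi(\bv_1;\by_1) + G_\psi\mathcal{P}_{T,p})\,\xi_{p,q}(d)\,T/\lambda})$, the same order as $\texttt{term (d)}$. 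Summing the two matched contributions delivers the claimed bound.

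The main obstacle—indeed essentially the only content beyond bookkeeping—is this final balancing: one must check that the specific $\mu$ in the theorem is precisely what forces the smoothing residuals to inherit the same $\sqrt{T}$ and $\sqrt{\xi_{p,q}(d)}$ scaling as the optimized $\texttt{term (d)}$, so that increasing $\mu$ to reduce variance and decreasing it to reduce bias meet at the stated optimum. Because Lemma \ref{lemma.relation between mu and alpha} ties the shrinkage to the smoothing via $\mu d^{1-1/p}=\alpha r$, I would also verify that this choice of $\mu$ keeps $\alpha\in(0,1)$ once $T$ is large enough, so that the shrunken domain $\mathcal{X}_\alpha$ and the feasibility of the perturbed queries $\by_t\pm\mu\bs_t$ remain legitimate throughout the run; this is where the implicit largeness-of-$T$ requirement enters, rather than the balancing being unconditional.
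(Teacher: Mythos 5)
Your proof is correct and takes essentially the same route as the paper, which omits the proof of Theorem \ref{thm.dynamic regret of BMD} precisely because it ``follows directly from the combination of three-term decomposition \eqref{eq.three terms decomposition} and Lemmas \ref{lemma.bound term b and c} and \ref{lemma.bound term d}''---the assembly, optimal-$\eta$ balancing via \eqref{eq.optimal step size}, and choice of $\mu$ to match the smoothing residuals that you carry out. Your closing check that the prescribed $\mu$ keeps $\alpha\in(0,1)$ through the relation $\mu d^{1-1/p}=\alpha r$ of Lemma \ref{lemma.relation between mu and alpha} is a reasonable feasibility point that the paper leaves implicit.
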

\begin{proof}
    Combining \eqref{eq.three terms decomposition}, \eqref{eq.bound term a}, Lemmas \ref{lemma.bound term b and c} and \ref{lemma.bound term d}, and the definition of $A_\psi^\alpha$ in \eqref{eq.definition A psi}, we obtain
    \begin{align*}
        \mE \left[\sum_{t=1}^T \frac{f_t(\bx^{t,+}) + f_t(\bx^{t,-})}{2} - f_t(\bu^t)\right]
        \le \ &\frac{A_\psi^\alpha + G_\psi^\alpha \mathcal{P}_{T,p}}{\eta}
        + \frac{6(1+\sqrt{2})^2 G^2 \xi_{p,q}(d) T}{\lambda}\eta
        + G T\mu c_{\mu}(p,q,d) \\
        = \ & 2\sqrt{6}(1+\sqrt{2})G\sqrt{\left(A_\psi^\alpha + G_\psi^\alpha\mathcal{P}_{T,p}\right)\xi_{p,q}(d)T/\lambda} + G T\mu c_{\mu}(p,q,d),
    \end{align*}
    where the last step substitutes $\eta = \eta_\star$. If the minimum defining $\mu$ is attained at the first value, then we have
    \[
        GT\mu c_{\mu}(p,q,d)
        = GR\sqrt{\xi_{p,q}(d) T/\lambda} = \mathcal{O}\left(G  \sqrt{\left(A_\psi^\alpha + G_\psi^\alpha \mathcal{P}_{T,p}\right)\xi_{p,q}(d) T  / \lambda}\right),
    \]
    where the last step holds since $R^2 \le A_\psi^\alpha \le A_\psi^\alpha + G_\psi^\alpha \mathcal{P}_{T,p}$. If instead $\mu=r/2$, then it follows
    \[
        \frac{r}{2} \le \frac{R\sqrt{\xi_{p,q}(d)}}{\sqrt{\lambda T} c_{\mu}(p,q,d)},
    \]
    which further implies
    \[
        \frac{r}{2}Tc_{\mu}(p,q,d)
        \le R\sqrt{\xi_{p,q}(d)T/\lambda}.
    \]
    Hence in both cases, it holds that
    \[
        GT\mu c_{\mu}(p,q,d) \le GR\sqrt{\xi_{p,q}(d) T/\lambda} = \mathcal{O}\left(G  \sqrt{\left(A_\psi^\alpha + G_\psi^\alpha \mathcal{P}_{T,p}\right)\xi_{p,q}(d) T  / \lambda}\right).
    \]
    Therefore, we obtain
    \begin{align*}
        \mE \left[\sum_{t=1}^T \frac{f_t(\bx^{t,+}) + f_t(\bx^{t,-})}{2} - f_t(\bu^t)\right]
        = \mathcal{O}\left(G  \sqrt{\left(A_\psi^\alpha + G_\psi^\alpha \mathcal{P}_{T,p}\right)\xi_{p,q}(d) T  / \lambda}\right).
    \end{align*}
    The proof is completed.
\end{proof}

\section{Parameter-free Bandit Mirror Descent}\label{section.pbmd}
\subsection{Algorithm design}

\begin{algorithm}[h]
  \caption{Parameter-free Bandit Mirror Descent (\texttt{PBMD})}
  \label{alg.PBMD}
  \begin{algorithmic}[1]
    \item \textbf{Input:} smoothing parameter $\mu>0$, shrinkage parameter $\alpha>0$, weight update parameter $\gamma > 0$, number of base learners $N$, candidate step sizes $\{\eta_{(k)}\}_{k=1}^{N}$;
    \item \textbf{Initialize:} pick $\by^1\in\mathcal{X}_{\alpha}$, set $\by_{(k)}^{1}=\by^1$ for every base learner $k$, and set $\omega_{(k)}^{1}= \frac{N+1}{N}\frac{1}{k(k+1)}$ for each base learner $k$;
    \item \textbf{For} $t = 1,2,\ldots,T$ \textbf{do}
    \item \quad Meta learner receives $\by_{(k)}^{t}$, $\forall k$, and combines the decision $\by^t = \sum_{k=1}^N \omega_{(k)}^{t}\by_{(k)}^{t}$; \label{line.meta combine}
    \item \quad Meta learner generates $\bs^t \sim \unif(\partial \mathbb{B}_1^d)$, and plays decisions: \label{line.meta play}
        \begin{align*}
            \bx^{t,+} = \by^t + \mu \bs^t, \quad \bx^{t,-} = \by^t - \mu \bs^t;
        \end{align*}
    \item \quad Meta learner receives $f_t(\bx^{t,+})$, $f_t(\bx^{t,-})$ as feedback, and constructs the  estimator:
                    \begin{align*}
                        \bg^t = \frac{d}{2\mu}\left(f_t(\bx^{t,+}) - f_t(\bx^{t,-})\right) \cdot \sign(\bs^t);
                    \end{align*}
    \item \quad Meta learner constructs the surrogate $\varphi^t(\by) = \inner{\bg^t}{\by - \by^t}$, and updates weights \label{line.update omega}
                    \begin{align*}
                        \omega_{(k)}^{t+1}=\frac{ \omega_{(k)}^{t} \exp \left(-\gamma \varphi^t(\by_{(k)}^{t})\right)}{\sum_{k = 1}^N  \omega_{(k)}^{t} \exp \left(-\gamma \varphi^t(\by_{(k)}^{t})\right)};
                    \end{align*}
    \item \quad Each base learner receives $\bg^t$, and updates the iterate in parallel:
    \begin{align*}
     \by_{(k)}^{t+1} = \argmin_{\by \in \mathcal{X}_{\alpha}}\inner{\nabla \varphi^t(\by_{(k)}^{t})}{\by} + \frac{1}{\eta_{(k)}} B_{\psi}(\by;\by_{(k)}^{t});
    \end{align*}\label{line:parallel update}
    \item \textbf{end For}
  \end{algorithmic}
\end{algorithm}

In this section, we introduce the parameter-free Bandit Mirror Descent (\texttt{PBMD}) in Algorithm \ref{alg.PBMD}, which eliminates the need for prior knowledge of path variation while attaining the same bound as stated in Theorem \ref{thm.dynamic regret of BMD}. The high-level idea is to maintain multiple candidates, referred to as \textit{base learners}, in parallel with a \textit{meta learner}. The meta learner employs expert-tracking techniques \citep{cesa2006prediction} to aggregate information, combine predictions, and adaptively track the best parameter (Lines \ref{line.meta combine} and \ref{line.meta play} in Algorithm \ref{alg.PBMD}). This allows the meta learner to make well-informed decisions in non-stationary environments.

Let us begin by removing the dependence of path variation on the step size. From Equation \eqref{eq.optimal step size}, although the exact value of $\mathcal{P}_{T,p}$ is unknown, the range of the optimal step size can still be determined. Given that $\|\bu^{t+1} - \bu^t\|_p \le \|\bu^{t+1} \|_p + \|\bu^t\|_p \le 2R$, we know that $\eta_\star \in [\eta_{\operatorname{lb}}, \eta_{\operatorname{ub}}]$ with
\begin{align*}
    \eta_{\operatorname{lb}} &\triangleq \sqrt{\frac{A_\psi^\alpha}{6(1+\sqrt{2})^2 G^2 \xi_{p,q}(d)T/\lambda}}, \\
    \eta_{\operatorname{ub}} &\triangleq \sqrt{\frac{A_\psi^\alpha + 2RG_\psi^\alpha T}{6(1+\sqrt{2})^2 G^2 \xi_{p,q}(d)T/\lambda}}.
\end{align*}
Hence, we can apply the grid search by constructing a pool of candidate step sizes, denoted by 
\begin{align*}
   \mathcal{E} = \{\eta_{(k)}: k = 1, \ldots, N \}. 
\end{align*}
The elements of this pool are given by
\begin{equation}\label{eq.candidate step size pool}
   \eta_{(k)} = 2^{k-1} \sqrt{\frac{A_\psi^\alpha}{6(1+\sqrt{2})^2 G^2 \xi_{p,q}(d)T/\lambda}}, 
\end{equation}
and $N$ is the number of candidate step sizes, set as
\begin{align*}
    N=\left\lceil\frac{1}{2} \log _2\left(1+\frac{2 R G_\psi^\alpha T}{A_\psi^\alpha}\right)\right\rceil+1.
\end{align*}
Note that all base learners are initialized at the same point $\by^1$. Maintain $N$ base learners in parallel and each runs \texttt{BMD} with a distinct step size in the pool $\mathcal{E}$ (Line \ref{line:parallel update} in Algorithm \ref{alg.PBMD}). There must exist a base learner $k^\star$, equipped with the step size $\eta_{(k^\star)}$ for some $k^\star \in \{1, \ldots, N\}$ such that $\eta_{(k^\star)} \le \eta_\star \le 2\eta_{(k^\star)}$ with
\begin{equation}\label{eq.upper bound k star}
    k^\star \le \left\lceil\frac{1}{2} \log _2\left(1+\frac{G_\psi^\alpha \mathcal{P}_{T,p}}{A_\psi^\alpha}\right)\right\rceil+1.
\end{equation}
It yields that the $k^\star$th base learner enjoys dynamic regret comparable to that of the optimal step size \eqref{eq.optimal step size}. Then the meta learner combines the predictions from all base learners and plays the decision accordingly.

While the above argument is promising, it has a minor limitation: maintaining $N$ base learners requires $\mathcal{O}(N)$ function evaluations per iteration, which is not allowed. In this scenario, only two function values can be observed at each iteration and are used to construct the gradient estimator \eqref{eq.gradient estimator}. However, we may replace the original functions with some suitable surrogates $\{\varphi^t\}_{t=1}^T$. Indeed, based on the three-term decomposition \eqref{eq.three terms decomposition} and Equation \eqref{eq.bound term a}, the main effort of proving the expected dynamic regret analysis is to derive an upper bound for \texttt{term (d)}. By using the convexity of smoothing functions, we derive the relation
\begin{align*}
   f^\mu_t(\by^t) - f^\mu_t(\bv^t) \le \inner{\nabla f^\mu_t(\by^t)}{\by^t - \bv^t} = \mE_{\bs^t}\left[ \inner{\bg^t}{\by^t - \bv^t} \mid \cF_{t-1}\right], \ \forall t = 1, \ldots, T.
\end{align*}
Then for any iteration $t$, it suggests that the linear functions $\varphi^t(\by) = \inner{\bg^t}{\by - \by^t}$ can serve as a surrogate since
\begin{align*}
   f^\mu_t(\by^t) - f^\mu_t(\bv^t) \le \mE_{\bs^t}\left[ \varphi^t(\by^t) - \varphi^t(\bv^t) \mid \cF_{t-1}\right].
\end{align*}
By summing over $t$ from $t = 1$ to $T$ and using tower  rule, we obtain
\begin{align*}
    \mE\left[\sum_{t=1}^T f^\mu_t(\by^t) - f^\mu_t(\bv^t)\right]  \le \mE\left[\sum_{t=1}^T \varphi^t(\by^t) - \varphi^t(\bv^t)\right].
\end{align*}
Therefore, we can now initialize $N$ base learners to perform \texttt{BMD} over surrogates $\{\varphi^t\}_{t=1}^T$ where each base learner is associated with a specific step size. Since the gradient of $\varphi^t$ is essentially $\bg^t$, it can be computed by querying the function value of the original loss function $f_t$ only twice. Finally, putting all components together, the \texttt{PBMD} algorithm is completed.

\subsection{Regret analysis of \texttt{PBMD}}
Recall the preliminary investigation of the expected dynamic regret in Section \ref{section.bmd}. By combining three-term decomposition \eqref{eq.three terms decomposition} and Lemma \ref{lemma.bound term b and c}, we obtain
\begin{align*}
     &\mE \left[\sum_{t=1}^T \frac{f_t(\bx^{t,+}) + f_t(\bx^{t,-})}{2} - f_t(\bu^t)\right] \\
   \le \  &\mE\left[\sum_{t=1}^T f^\mu_t(\by^t) - f^\mu_t(\bv^t)\right] + G T\mu c_{\mu}(p,q,d)
 \\
   \le \  &\mE\left[\sum_{t=1}^T \varphi^t(\by^t) - \varphi^t(\bv^t)\right] + G T\mu c_{\mu}(p,q,d).
\end{align*}
The last inequality holds due to the construction of surrogates. Hence, it is sufficient to analyze the expected dynamic regret over the surrogate. For any base learner $k \in \{1, 2, \ldots, N\}$, the regret can be decomposed as
\begin{equation}\label{eq.decomposition of surrogate dynamic regret}
   \mE\left[\sum_{t=1}^T \varphi^t(\by^t) - \varphi^t(\bv^t)\right] = \underbrace{\mE\left[\sum_{t=1}^T \varphi^t(\by_{(k)}^{t}) - \varphi^t(\bv^t)\right]}_{\texttt{base-regret}} + \underbrace{\mE\left[\sum_{t=1}^T \varphi^t(\by^t) - \varphi^t(\by_{(k)}^{t})\right]}_{\texttt{meta-regret}}.
\end{equation}
Notice that the \texttt{base-regret} corresponds to \texttt{term (d)} in \eqref{eq.bound term a}, but with the smoothing function $f^\mu_t$ replaced by the surrogate $\varphi^t$ for the $k$-th base learner. Since each base learner performs \texttt{BMD} with respect to $\varphi^t$, the result in Lemma \ref{lemma.pathwise mirror descent} can be applied. Furthermore, as Algorithm \ref{alg.PBMD} employs a grid search to determine the optimal step size $\eta_\star$, then the \texttt{base-regret} can be well bounded for some $k = k^\star$. 

\begin{lemma}\label{lemma.bound base}
    Suppose Assumptions \ref{assumption.bounded region} and \ref{assumption.G Lipschitz} hold. There exists a base learner $k^\star$ such that
    \begin{align*}
       \mE\left[\sum_{t=1}^T \varphi^t(\by_{(k^\star)}^{t}) - \varphi^t(\bv^t)\right]
       \le 3(1+\sqrt{2})G \sqrt{6\left(A_\psi^\alpha + G_\psi^\alpha \mathcal{P}_{T,p}\right)\xi_{p,q}(d) T/\lambda},
    \end{align*}
     where the expectation is taken with respect to all randomness $\bs^1, \ldots, \bs^T$.
\end{lemma}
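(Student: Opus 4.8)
The plan is to reuse the single-learner analysis of Lemma \ref{lemma.bound term d} almost verbatim, exploiting that the surrogate $\varphi_t(\by)=\inner{\bg_t}{\by-\by_t}$ is linear with $\nabla\varphi_t(\by_{(k),t})=\bg_t$. First I would note the exact identity $\varphi_t(\by_{(k),t})-\varphi_t(\bv_t)=\inner{\bg_t}{\by_{(k),t}-\bv_t}$, so that the \texttt{base-regret} for learner $k$ is precisely $\mE[\sum_{t=1}^T\inner{\bg_t}{\by_{(k),t}-\bv_t}]$, the same quantity driving \texttt{term (d)} but with $\by_t$ replaced by $\by_{(k),t}$. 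Since each base learner runs exactly \texttt{BMD} on $\varphi_t$ (Line \ref{line:parallel update}) with gradient $\bg_t$ and step size $\eta_{(k)}$, the optimality-condition manipulation and the three-point identity used in Lemma \ref{lemma.bound term d} transfer unchanged; unlike there, no convexity step is needed because the surrogate gives equality rather than a first-order lower bound. The second-order moment control \eqref{eq.varaince} from Lemma \ref{lemma.properties of gradient estimator}, applied through the tower rule, then supplies the leading coefficient, yielding for every $k$ the per-learner bound $\frac{F_\psi+B_\psi(\bv_1;\by_1)+G_\psi\mathcal{P}_{T,p}}{\eta_{(k)}}+\frac{6(1+\sqrt{2})^2G^2\xi_{p,q}(d)T}{\lambda}\eta_{(k)}$.

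Writing $A\triangleq F_\psi+B_\psi(\bv_1;\by_1)+G_\psi\mathcal{P}_{T,p}$ and $C\triangleq 6(1+\sqrt{2})^2G^2\xi_{p,q}(d)T/\lambda$, the per-learner bound is $A/\eta_{(k)}+C\eta_{(k)}$, minimized at $\eta_*=\sqrt{A/C}$, which is exactly \eqref{eq.optimal step size}. The next step is to invoke the grid-search guarantee: by the construction of the pool \eqref{eq.candidate step size pool} together with \eqref{eq.upper bound k star}, there is an index $k^*$ with $\eta_{(k^*)}\le\eta_*\le 2\eta_{(k^*)}$. Substituting this $k^*$ and using $\eta_{(k^*)}\le\eta_*$ to bound $C\eta_{(k^*)}\le C\eta_*=\sqrt{AC}$, together with $\eta_{(k^*)}\ge\eta_*/2$ to bound $A/\eta_{(k^*)}\le 2A/\eta_*=2\sqrt{AC}$, gives $A/\eta_{(k^*)}+C\eta_{(k^*)}\le 3\sqrt{AC}$. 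Unpacking $3\sqrt{AC}=3(1+\sqrt{2})G\sqrt{6A\,\xi_{p,q}(d)T/\lambda}$ reproduces the claimed bound.

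The only genuinely new element beyond Lemma \ref{lemma.bound term d} is checking that the telescoping-plus-path-variation step remains valid after replacing $\by_t$ by $\by_{(k),t}$; this is routine, since $\by_{(k),t}$ is $\F_{t-1}$-measurable and $\bv_t$ is deterministic, so the conditional-expectation identity used to bring in \eqref{eq.varaince} still applies and the summed Bregman terms telescope as before. I expect the main (minor) obstacle to be bookkeeping around initialization and the grid: Lemma \ref{lemma.bound term d} naturally produces $B_\psi(\bv_1;\by_{(k),1})$, so to match the stated $B_\psi(\bv_1;\by_1)$ one should take a common start $\by_{(k),1}=\by_1$ for all learners (or uniformly bound $B_\psi(\bv_1;\by_{(k),1})\le F_\psi$), and one must confirm that the factor-of-two spacing of \eqref{eq.candidate step size pool} together with $\eta_*\in[\eta_{\operatorname{lb}},\eta_{\operatorname{ub}}]$ indeed sandwiches $\eta_*$ at the index certified by \eqref{eq.upper bound k star}. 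No new probabilistic or geometric input is required.
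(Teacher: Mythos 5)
Your proposal is correct and follows essentially the same route as the paper: apply Lemma \ref{lemma.bound term d} to each base learner running \texttt{BMD} on the linear surrogates (using the second-moment bound \eqref{eq.varaince} for $\nabla\varphi_t = \bg_t$), then use the grid-search sandwich $\eta_{(k^*)}\le\eta_*\le 2\eta_{(k^*)}$ to get the $3\sqrt{AC}$ bound. Your extra care about the initialization $B_{\psi}(\bv_1;\by_{(k),1})$ versus $B_{\psi}(\bv_1;\by_1)$ is a legitimate bookkeeping point that the paper glosses over, resolved exactly as you suggest (common initialization or bounding by $F_{\psi}$).
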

\begin{proof}
    Fix a base learner $k$. Since $\nabla \varphi^t(\by)=\bg^t$ is constant in $\by$, the update in Line \ref{line:parallel update} of Algorithm \ref{alg.PBMD} is exactly mirror descent with step size $\eta_{(k)}$ and the vector sequence $\bg^1,\ldots,\bg^T$. Therefore, Lemma \ref{lemma.pathwise mirror descent} implies that
    \begin{align*}
        \sum_{t=1}^T \big(\varphi^t(\by_{(k)}^{t}) - \varphi^t(\bv^t)\big)
        = \sum_{t=1}^T \inner{\bg^t}{\by_{(k)}^{t} - \bv^t}
        \le \frac{F_\psi^\alpha + B_{\psi}(\bv^1;\by^1)}{\eta_{(k)}}
        + \frac{G_\psi^\alpha\mathcal{P}_{T,p}}{\eta_{(k)}}
        + \frac{\eta_{(k)}}{2\lambda}\sum_{t=1}^T \|\bg^t\|_{p^*}^2.
    \end{align*}
    Taking expectation and using \eqref{eq.definition A psi} together with \eqref{eq.varaince}, we obtain
    \begin{align*}
        \mE\left[\sum_{t=1}^T \varphi^t(\by_{(k)}^{t}) - \varphi^t(\bv^t)\right]
        \le \frac{A_\psi^\alpha + G_\psi^\alpha \mathcal{P}_{T,p}}{\eta_{(k)}}
        + \frac{6(1+\sqrt{2})^2 G^2 \xi_{p,q}(d) T}{\lambda}\eta_{(k)}.
    \end{align*}
    By construction of the pool \eqref{eq.candidate step size pool}, there exists $k^\star$ such that $\eta_{(k^\star)} \le \eta_\star \le 2\eta_{(k^\star)}$. Hence
    \begin{align*}
        \mE\left[\sum_{t=1}^T \varphi^t(\by_{(k^\star)}^{t}) - \varphi^t(\bv^t)\right]
        \le \frac{2(A_\psi^\alpha + G_\psi^\alpha \mathcal{P}_{T,p})}{\eta_\star}
        + \frac{6(1+\sqrt{2})^2 G^2 \xi_{p,q}(d) T}{\lambda}\eta_\star.
    \end{align*}
    Substituting the definition \eqref{eq.optimal step size} of $\eta_\star$ yields the claim.
\end{proof}

We analyze the meta-regret from an entropy mirror descent perspective, viewing the multiplicative weights update as online mirror descent on the simplex. This approach avoids the use of Hoeffding-type inequalities \citep{zhang2018adaptive,zhao2021bandit}, and removes the need for nonnegativity and exponential boundedness assumptions on the losses. Let
\[
   \Delta_N \triangleq \left\{\boldsymbol{\omega}\in \reals^N : \sum_{j=1}^N \omega_j = 1, \boldsymbol{\omega} \ge 0\right\},
\]
be the simplex, and define the Kullback-Leibler divergence
\[
   D_{\mathrm{KL}}(\bu\|\boldsymbol{\omega}) \triangleq \sum_{j=1}^N u_j \log\!\left(\frac{u_j}{\omega_j}\right),
\]
with the convention $0\log 0 = 0$. Note that the multiplicative weights update (Line \ref{line.update omega}) in Algorithm \ref{alg.PBMD} is equivalent to
\begin{equation}\label{eq:KL update}
    \boldsymbol{\omega}^{t+1}
   = \argmin_{\boldsymbol{\omega}\in \Delta_N}
     \gamma \inner{\boldsymbol{\ell}^t}{\boldsymbol{\omega}} + D_{\mathrm{KL}}(\boldsymbol{\omega}\|\boldsymbol{\omega}^t),
\end{equation}
where $\boldsymbol{\ell}^t$ is the realized loss vector at round $t$.

\begin{lemma}\label{lemma.pathwise entropic md}
    Let $\boldsymbol{\ell}^1,\ldots,\boldsymbol{\ell}^T \in \reals^N$ be arbitrary vectors, and let $\boldsymbol{\omega}^{t+1}$ be generated by update \eqref{eq:KL update}. Then for every $\bu \in \Delta_N$, we have
    \begin{equation*}
        \sum_{t=1}^T \inner{\boldsymbol{\ell}^t}{\boldsymbol{\omega}^t - \bu}
        \le \frac{D_{\mathrm{KL}}(\bu\|\boldsymbol{\omega}^1)}{\gamma}
        + \frac{\gamma}{2}\sum_{t=1}^T \|\boldsymbol{\ell}^t\|_{\infty}^2.
    \end{equation*}
\end{lemma}
\begin{proof}
    Let $\Psi(\boldsymbol{\omega}) \triangleq \sum_{j=1}^N \omega_j \log (\omega_j)$ be the negative entropy. Since $\boldsymbol{\omega}^1$ has strictly positive coordinates and the update is multiplicative, all iterates $\boldsymbol{\omega}^t$ remain in the interior of $\Delta_N$, so the gradients of the negative entropy $\Psi$ at the iterates are well defined. Fix an arbitrary sequence $\boldsymbol{\ell}^1,\ldots,\boldsymbol{\ell}^T$. By optimality of the update, we have
    \begin{align*}
        \inner{\gamma \boldsymbol{\ell}^t + \nabla \Psi(\boldsymbol{\omega}^{t+1}) - \nabla \Psi(\boldsymbol{\omega}^t)}{\bu - \boldsymbol{\omega}^{t+1}} \ge 0,
        \quad \forall \bu \in \Delta_N.
    \end{align*}
    Applying the three-point identity for the Bregman divergence of $\Psi$ gives
    \begin{align*}
        \gamma \inner{\boldsymbol{\ell}^t}{\boldsymbol{\omega}^t - \bu}
        \le \ B_{\Psi}(\bu;\boldsymbol{\omega}^t) - B_{\Psi}(\bu;\boldsymbol{\omega}^{t+1}) - B_{\Psi}(\boldsymbol{\omega}^{t+1};\boldsymbol{\omega}^t) + \gamma \inner{\boldsymbol{\ell}^t}{\boldsymbol{\omega}^t - \boldsymbol{\omega}^{t+1}}.
    \end{align*}
    Since the negative entropy is $1$-strongly convex with respect to $\ell_1$, we obtain
    \[
       B_{\Psi}(\boldsymbol{\omega}^{t+1};\boldsymbol{\omega}^t)
       = D_{\mathrm{KL}}(\boldsymbol{\omega}^{t+1}\|\boldsymbol{\omega}^t)
       \ge \frac{1}{2}\|\boldsymbol{\omega}^{t+1} - \boldsymbol{\omega}^t\|_1^2.
    \]
    By Hölder's inequality and Young's inequality, we have
    \begin{align*}
        \gamma \inner{\boldsymbol{\ell}^t}{\boldsymbol{\omega}^t - \boldsymbol{\omega}^{t+1}}
        \le \gamma \|\boldsymbol{\ell}^t\|_{\infty}\|\boldsymbol{\omega}^t - \boldsymbol{\omega}^{t+1}\|_1
        \le \frac{\gamma^2}{2}\|\boldsymbol{\ell}^t\|_{\infty}^2 + \frac{1}{2}\|\boldsymbol{\omega}^t - \boldsymbol{\omega}^{t+1}\|_1^2.
    \end{align*}
    Combining the above two inequalities yields
    \begin{align*}
        \gamma \inner{\boldsymbol{\ell}^t}{\boldsymbol{\omega}^t - \bu}
        \le B_{\Psi}(\bu;\boldsymbol{\omega}^t) - B_{\Psi}(\bu;\boldsymbol{\omega}^{t+1}) + \frac{\gamma^2}{2}\|\boldsymbol{\ell}^t\|_{\infty}^2.
    \end{align*}
    Summing over $t=1,\ldots,T$ and telescoping the Bregman terms gives
    \begin{align*}
        \sum_{t=1}^T \inner{\boldsymbol{\ell}^t}{\boldsymbol{\omega}^t - \bu}
        \le \frac{B_{\Psi}(\bu;\boldsymbol{\omega}^1)}{\gamma} + \frac{\gamma}{2}\sum_{t=1}^T \|\boldsymbol{\ell}^t\|_{\infty}^2.
    \end{align*}
    Since $B_{\Psi}(\bu;\boldsymbol{\omega}^1) = D_{\mathrm{KL}}(\bu\|\boldsymbol{\omega}^1)$, we complete the proof.
\end{proof}

\begin{lemma}\label{lemma.bound meta}
    Suppose Assumptions \ref{assumption.bounded region} and \ref{assumption.G Lipschitz} hold. For any base learner $k \in \{1,\ldots,N\}$ and any $\gamma > 0$, it holds that
    \begin{align*}
        \mE\left[\sum_{t=1}^T \varphi^t(\by^t) - \varphi^t(\by_{(k)}^{t})\right]
        \le \frac{-\log(\omega_{(k)}^{1})}{\gamma} + 24(1+\sqrt{2})^2 \gamma R^2 G^2 \xi_{p,q}(d)T.
    \end{align*}
    Consequently, if
    \[
        \gamma = \frac{1}{\sqrt{48}(1+\sqrt{2})RG\sqrt{\xi_{p,q}(d)T}},
    \]
    then we have
    \begin{align*}
        \mE\left[\sum_{t=1}^T \varphi^t(\by^t) - \varphi^t(\by_{(k)}^{t})\right]
        \le 7(1+\sqrt{2})RG\sqrt{\xi_{p,q}(d)T}\left(1-\log(\omega_{(k)}^{1})\right).
    \end{align*}
\end{lemma}
\begin{proof}
    For each round $t$, define the loss vector $\boldsymbol{\ell}^t \in \reals^N$ by
    \[
       \ell_j^t \triangleq \varphi^t(\by_{(j)}^{t}),
       \quad j=1,\ldots,N.
    \]
    Because $\varphi^t$ is linear and $\by^t = \sum_{j=1}^N \omega_{(j)}^{t}\by_{(j)}^{t}$, we have the following identity
    \begin{align*}
        \inner{\boldsymbol{\ell}^t}{\boldsymbol{\omega}^t}
        = \sum_{j=1}^N \omega_{(j)}^{t}\varphi^t(\by_{(j)}^{t})
        = \varphi^t\!\left(\sum_{j=1}^N \omega_{(j)}^{t}\by_{(j)}^{t}\right)
        = \varphi^t(\by^t)
        = 0.
    \end{align*}
    Applying Lemma \ref{lemma.pathwise entropic md} with $\bu=\mathbf e_k$ gives
    \begin{equation}\label{eq:pathwise inequality KL}
        \sum_{t=1}^T \big(\varphi^t(\by^t) - \varphi^t(\by_{(k)}^{t})\big)
        = \sum_{t=1}^T \inner{\boldsymbol{\ell}^t}{\boldsymbol{\omega}^t - \mathbf e_k}
        \le \frac{D_{\mathrm{KL}}(\mathbf e_k\|\boldsymbol{\omega}^1)}{\gamma}
        + \frac{\gamma}{2}\sum_{t=1}^T \|\boldsymbol{\ell}^t\|_{\infty}^2. 
    \end{equation}
    Since $D_{\mathrm{KL}}(\mathbf e_k\|\boldsymbol{\omega}^1) = -\log (\omega_{(k)}^{1})$, it remains to bound $\|\boldsymbol{\ell}^t\|_{\infty}^2$. For each $j$, since $\by_{(j)}^{t}, \by^t \in \mathcal{X}_{\alpha} \subseteq \mathcal{X} \subseteq R\mathbb{B}_p^d$, we have
    \begin{align*}
        |\ell_j^t|
        = |\varphi^t(\by_{(j)}^{t})|
        = |\inner{\bg^t}{\by_{(j)}^{t} - \by^t}|
        \le \|\bg^t\|_{p^*}\|\by_{(j)}^{t} - \by^t\|_p
        \le 2R\|\bg^t\|_{p^*}.
    \end{align*}
    It implies that $\|\boldsymbol{\ell}^t\|_{\infty}^2 \le 4R^2\|\bg^t\|_{p^*}^2$. Taking expectations and using \eqref{eq.varaince}, we obtain
    \begin{align*}
        \mE [\|\boldsymbol{\ell}^t\|_{\infty}^2]
        \le 4R^2 \mE\left[\mE\left[\|\bg^t\|_{p^*}^2 \mid \cF_{t-1}\right]\right]
        \le 48(1+\sqrt{2})^2 R^2 G^2 \xi_{p,q}(d).
    \end{align*}
    Taking expectation in the regret bound \eqref{eq:pathwise inequality KL} yields
    \begin{align*}
        \mE\left[\sum_{t=1}^T \varphi^t(\by^t) - \varphi^t(\by_{(k)}^{t})\right]
        \le \frac{-\log (\omega_{(k)}^{1})}{\gamma}
        + 24(1+\sqrt{2})^2 \gamma R^2 G^2 \xi_{p,q}(d)T,
    \end{align*}
    which proves the first result. The second claim follows by substituting the value of $\gamma$ and using $-\log (\omega_{(k)}^{1}) \ge 0$.
\end{proof}

\begin{theorem}\label{thm.dynamic regret of PBMD}
    Suppose Assumptions \ref{assumption.bounded region} and \ref{assumption.G Lipschitz} hold. Let $\psi: \operatorname{int}(\mathcal{X}) \rightarrow \reals$ be a divergence-generating function that is $\lambda$-strongly convex with respect to the $\ell_p$-norm ($\lambda > 0$). Choose the step-size pool $\mathcal{E}$ as in \eqref{eq.candidate step size pool}, and set
    \begin{align*}
        \gamma &= \frac{1}{\sqrt{48}(1+\sqrt{2})RG\sqrt{\xi_{p,q}(d)T}}, \\
        \mu &= \min\left\{\frac{R\sqrt{\xi_{p,q}(d)}}{\sqrt{\lambda T} c_{\mu}(p,q,d)},\ \frac{r}{2}\right\},
    \end{align*}
    together with $\alpha = \mu/r$. Then the expected dynamic regret of \texttt{PBMD} satisfies
    \begin{align*}
        \mE \left[\sum_{t=1}^T \frac{f_t(\bx^{t,+}) + f_t(\bx^{t,-})}{2} - f_t(\bu^t)\right]
        = \mathcal{O}\left(G\sqrt{\left(A_\psi^\alpha + G_\psi^\alpha\mathcal{P}_{T,p}\right)\xi_{p,q}(d)T/\lambda}\right),
    \end{align*}
    where the expectation is taken with respect to all randomness $\bs^1, \ldots, \bs^T$.
\end{theorem}
\begin{proof}
    Apply \eqref{eq.decomposition of surrogate dynamic regret} with $k = k^\star$. Lemmas \ref{lemma.bound base} and \ref{lemma.bound meta} give
    \begin{equation}\label{eq.decomposition-bound}
        \begin{aligned}
            &\mE \left[\sum_{t=1}^T \frac{f_t(\bx^{t,+}) + f_t(\bx^{t,-})}{2} - f_t(\bu^t)\right] \\
        \le \ &\mathcal{O}\left(G\sqrt{\left(A_\psi^\alpha + G_\psi^\alpha\mathcal{P}_{T,p}\right)\xi_{p,q}(d)T/\lambda}\right)
        + 7(1+\sqrt{2})RG\sqrt{\xi_{p,q}(d)T}\left(1-\log\omega_{(k^\star)}^{1}\right)
        + G T\mu c_{\mu}(p,q,d).
        \end{aligned}
    \end{equation}
    For the \texttt{meta-regret}, we have
    \begin{align*}
        - \log \omega_{(k^\star)}^{1}
        = \log\left(\frac{N}{N+1}k^\star(k^\star+1)\right)
        \le 2\log(k^\star+1)
    \end{align*}
    due to that $\omega_{(k)}^{1}= \frac{N+1}{N}\frac{1}{k(k+1)}$. By \eqref{eq.upper bound k star}, we obtain
    \begin{align*}
        - \log \omega_{(k^\star)}^{1}
        \le 2\log\left(\left\lceil\frac{1}{2} \log _2\left(1+\frac{G_\psi^\alpha \mathcal{P}_{T,p}}{A_\psi^\alpha}\right)\right\rceil+2\right)
        = \mathcal{O}\left(1+\log\log\left(2+\frac{G_\psi^\alpha\mathcal{P}_{T,p}}{A_\psi^\alpha}\right)\right).
    \end{align*}
    Since $A_\psi^\alpha \ge \lambda R^2$ by \eqref{eq.definition A psi}, it follows that $RG\sqrt{\xi_{p,q}(d)T} \le G\sqrt{A_\psi^\alpha\xi_{p,q}(d)T/\lambda}$. Moreover, note that $1+\log\log(2+z)=\mathcal{O}(\sqrt{1+z})$ for any $z\ge 0$. Consequently, the \texttt{meta-regret} is bounded by
    \begin{equation}\label{eq.bound-meta}
         \mathcal{O}\left(G\sqrt{A_\psi^\alpha\xi_{p,q}(d)T/\lambda}\cdot \sqrt{1+\frac{G_\psi^\alpha\mathcal{P}_{T,p}}{A_\psi^\alpha}}\right)
        = \mathcal{O}\left(G\sqrt{\left(A_\psi^\alpha + G_\psi^\alpha\mathcal{P}_{T,p}\right)\xi_{p,q}(d)T/\lambda}\right).
    \end{equation}
    For the term $GT\mu c_\mu(p,q,d)$, if the minimum defining $\mu$ is attained at the first value, then we have $ GT\mu c_{\mu}(p,q,d) = GR\sqrt{\xi_{p,q}(d)T/\lambda}$, which is dominated by the first term because $R^2 \le A_\psi^\alpha \le A_\psi^\alpha + G_\psi^\alpha\mathcal{P}_{T,p}$. If instead $\mu=r/2$, then it holds that
    \[
        \frac{r}{2} \le \frac{R\sqrt{\xi_{p,q}(d)}}{\sqrt{\lambda T} c_{\mu}(p,q,d)},
    \]
    which further implies
    \begin{equation}\label{eq.biased-error}
        GT\mu c_{\mu}(p,q,d) \le \frac{Gr}{2}Tc_{\mu}(p,q,d) \le GR\sqrt{\xi_{p,q}(d)T/\lambda}.
    \end{equation}
    Substituting both bounds \eqref{eq.bound-meta} and \eqref{eq.biased-error} into \eqref{eq.decomposition-bound} completes the proof.
\end{proof}

\subsection{Examples: Euclidean space and cross-polytope}
We now present examples with concrete bounds for \texttt{PBMD}. For each corollary, we specify the choices of $p,q$, the domain $\mathcal{X}$, and the regularizer $\psi$, and then estimate the deterministic quantities $A_\psi^\alpha$, $G_\psi^\alpha$, $\lambda$, and $\xi_{p,q}(d)$ appearing in Theorem \ref{thm.dynamic regret of PBMD}. We begin with the standard Euclidean geometry, where $p = q = 2$ and $\psi(\bx)=\|\bx\|_2^2 / 2$.

\begin{corollary}[Euclidean space]\label{proposition.BMD 2 norm}
   Suppose Assumption \ref{assumption.G Lipschitz} holds. Let $p = q = 2$, $\mathcal{X} = \{\bx \in \reals^d: \|\bx\|_2 \le 1\}$, and $\psi(\bx) = \|\bx\|_2^2 / 2$. Then the expected dynamic regret of \texttt{PBMD} satisfies
   \begin{equation*}
            \mE \left[\sum_{t=1}^T \frac{f_t(\bx^{t,+}) + f_t(\bx^{t,-})}{2} - f_t(\bu^t)\right] = \mathcal{O}\left(G \sqrt{dT\left(1 + \mathcal{P}_{T, 2}\right)}\right).
   \end{equation*}
   under the parameter setting in Theorem \ref{thm.dynamic regret of PBMD}, where the expectation is taken with respect to all randomness $\bs^1, \ldots, \bs^T$.
\end{corollary}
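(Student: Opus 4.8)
The plan is to obtain the corollary as a direct specialization of Theorem \ref{thm.dynamic regret of PBMD}, so that the only real work is to evaluate the geometry-dependent constants $\lambda$, $F_\psi$, $B_\psi(\bv_1;\by_1)$, $G_\psi$, and $\xi_{p,q}(d)$ for the Euclidean instance $p=q=2$, $\psi(\bx)=\norm{\bx}_2^2/2$, $\mathcal{X}=\{\bx\in\reals^d:\norm{\bx}_2\le 1\}$, and then substitute. First I would note that $p=q=2$ yields the conjugate exponents $p^*=q^*=2$, and that Assumption \ref{assumption.bounded region} holds with $r=R=1$ since $\mathcal{X}$ is exactly the closed unit $\ell_2$-ball, while Assumption \ref{assumption.G Lipschitz} is assumed directly. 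The generator $\psi(\bx)=\norm{\bx}_2^2/2$ has gradient $\nabla\psi(\bx)=\bx$ and Hessian equal to the identity, hence is $1$-strongly convex with respect to $\norm{\cdot}_2$, giving $\lambda=1$.

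Next I would compute the remaining constants explicitly. On the unit ball the quadratic $\norm{\bx}_2^2/2$ ranges over $[0,1/2]$, so $F_\psi=1/2=\mathcal{O}(1)$. For the quadratic generator the Bregman divergence reduces to $B_\psi(\bv_1;\by_1)=\tfrac12\norm{\bv_1-\by_1}_2^2$; since $\bv_1=(1-\alpha)\bu_1$ and $\by_1$ both lie in $\mathcal{X}$, the triangle inequality gives $\norm{\bv_1-\by_1}_2\le 2$, hence $B_\psi(\bv_1;\by_1)\le 2=\mathcal{O}(1)$. For $G_\psi=\max_t\norm{\nabla\psi(\by_t)}_{p^*}=\max_t\norm{\by_t}_2$, I would use that every iterate satisfies $\by_t\in(1-\alpha)\mathcal{X}\subseteq\mathcal{X}$, so $\norm{\by_t}_2\le 1$ and $G_\psi\le 1=\mathcal{O}(1)$. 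Finally, substituting $p=q=2$ into $\xi_{p,q}(d)=d^{1+2/\min\{q,2\}-2/p}$ gives $\xi_{2,2}(d)=d^{1+1-1}=d$.

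The last step is to plug these values into the bound of Theorem \ref{thm.dynamic regret of PBMD}: with $\lambda=1$, $F_\psi+B_\psi(\bv_1;\by_1)=\mathcal{O}(1)$, $G_\psi=\mathcal{O}(1)$, $\xi_{2,2}(d)=d$, and $\mathcal{P}_{T,p}=\mathcal{P}_{T,2}$, the regret becomes
$$\mathcal{O}\!\left(G\sqrt{\big(\mathcal{O}(1)+\mathcal{O}(1)\,\mathcal{P}_{T,2}\big)\,d\,T}\right)=\mathcal{O}\!\left(G\sqrt{dT(1+\mathcal{P}_{T,2})}\right),$$
which is the claimed bound. I do not expect a genuine obstacle here, since the argument is a routine evaluation of constants; the only point requiring a little care is confirming that the iterates remain feasible so that the estimates $G_\psi\le 1$ and $B_\psi(\bv_1;\by_1)\le 2$ are legitimate. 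This follows because both the base-learner updates and the combined iterate $\by_t=\sum_{k}\omega_{(k),t}\by_{(k),t}$ are convex combinations of points in the convex set $\mathcal{X}_\alpha\subseteq\mathcal{X}$, and are therefore contained in the unit ball.
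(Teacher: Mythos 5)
Your proposal is correct and follows essentially the same route as the paper: evaluate $\lambda=1$, $F_\psi=1/2$, $B_\psi(\bv_1;\by_1)\le 2$, $G_\psi\le 1$, and $\xi_{2,2}(d)=d$, then substitute into the regret bound of Theorem \ref{thm.dynamic regret of PBMD}. The only (immaterial) difference is your choice of radii $r=R=1$ versus the paper's $r=1/2$, $R=1$; since the final bound does not depend on $R$ and $r$ (they enter only through the smoothing parameter $\mu$), both choices yield the same conclusion.
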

\begin{proof}
    Choose $\by^1 = 0$. The set $\mathcal{X} = \{\bx \in \reals^d: \|\bx\|_2 \le 1\}$ satisfies Assumption \ref{assumption.bounded region} with $r=R=1$. Moreover, we have
    \begin{align*}
        F_\psi^\alpha &\le \sup_{\bx \in \mathcal{X}} \frac{\|\bx\|_2^2}{2} - \inf_{\bx \in \mathcal{X}} \frac{\|\bx\|_2^2}{2} = \frac{1}{2}, \\
        \sup_{\bv \in \mathcal{X}} B_{\psi}(\bv;\by^1) &= \sup_{\bv \in \mathcal{X}} \frac{\|\bv\|_2^2}{2} \le \frac{1}{2}, \\
        G_\psi^\alpha &\le \sup_{\bx \in \mathcal{X}} \|\nabla \psi(\bx)\|_2 = \sup_{\bx \in \mathcal{X}} \|\bx\|_2 \le 1, \\
        \xi_{2,2}(d) &= d, \quad \lambda = 1.
    \end{align*}
    Hence we may take $A_\psi^\alpha=2$. Substituting these estimates into Theorem \ref{thm.dynamic regret of PBMD} yields
    \[
        \mE \left[\sum_{t=1}^T \frac{f_t(\bx^{t,+}) + f_t(\bx^{t,-})}{2} - f_t(\bu^t)\right]
        = \mathcal{O}\left(G \sqrt{dT\left(1 + \mathcal{P}_{T, 2}\right)}\right).
    \]
\end{proof}

The above dynamic regret of \texttt{PBMD} in Euclidean geometry is optimal, matching the minimax lower bound established in \cite{zhao2021bandit}. Our result improves the dynamic regret bound of the parameter-free bandit gradient descent by a factor of $\mathcal{O}(\sqrt{d})$. Specifically, notice that $\mathcal{P}_{T, 2} = 0$ when the comparator sequence is chosen as $\bu^1 = \ldots = \bu^T = \argmin_{\bx \in \mathcal{X}} \ \sum_{t=1}^T f_t(\bx)$. In this case, the above bound simplifies to $\mathcal{O}(G\sqrt{dT})$, which matches the optimal result established by \citet{shamir2017optimal} for static regret. Next, we consider online learning over a $d$-dimensional cross-polytope. In this setting we take $p = q = 1 + 1/\log(d)$ and use the regularizer $\psi(\bx) = \|\bx\|_p^2 / 2$.

\begin{corollary}[Cross-polytope]\label{proposition.BMD cross polytope}
   Suppose Assumption \ref{assumption.G Lipschitz} holds. Let $p = q = 1 + 1/\log(d)$, $\mathcal{X} = \{\bx \in \reals^d: \|\bx\|_1 \le 1\}$, and $\psi(\bx) = \|\bx\|_p^2 / 2$. Then the expected dynamic regret of \texttt{PBMD} satisfies
   \begin{equation*}
      \mE \left[\sum_{t=1}^T \frac{f_t(\bx^{t,+}) + f_t(\bx^{t,-})}{2} - f_t(\bu^t)\right] = \mathcal{O}\left(G \sqrt{d\log(d)T\left(1 + \mathcal{P}_{T,p}\right)}\right).
   \end{equation*}
    under the parameter setting in Theorem \ref{thm.dynamic regret of PBMD}, where the expectation is taken with respect to all randomness $\bs^1, \ldots, \bs^T$.
\end{corollary}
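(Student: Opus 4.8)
The plan is to instantiate Theorem \ref{thm.dynamic regret of PBMD} for the cross-polytope, exactly as Corollary \ref{proposition.BMD 2 norm} does for the Euclidean ball. The whole task reduces to estimating the four problem constants $F_\psi$, $B_\psi(\bv_1;\by_1)$, $G_\psi$, $\xi_{p,q}(d)$ together with the strong-convexity modulus $\lambda$, and then checking that the ratio $\xi_{p,q}(d)/\lambda$ is precisely what produces the advertised $d\log(d)$ factor.

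First I would verify Assumption \ref{assumption.bounded region}. Using $\|\bx\|_p \le \|\bx\|_1$ and the equivalence $\|\bx\|_1 \le d^{1-1/p}\|\bx\|_p$ from Lemma \ref{lemma.vector norm}, the cross-polytope $\mathcal{X} = \{\|\bx\|_1 \le 1\}$ satisfies $d^{-(1-1/p)}\mathbb{B}_p^d \subseteq \mathcal{X} \subseteq \mathbb{B}_p^d$, so one may take $R = 1$ and $r = d^{-(1-1/p)}$. The key observation is that with $p = 1 + 1/\log(d)$ one has $1 - 1/p = 1/(\log(d)+1)$, hence $d^{1-1/p} = e^{\log(d)/(\log(d)+1)} \le e$. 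Thus $R/r$, which only enters the choice of $\mu$ and not the final bound, stays bounded by a universal constant; the auxiliary quantities $\upsilon_{p,q}(d) = d^{1-1/p}$ and $\zeta_q(d)$ in the prescription of $\mu$ are bounded for the same reason.

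Next I would compute the three $\psi$-dependent constants. Since every $\bx \in \mathcal{X}$ has $\|\bx\|_p \le \|\bx\|_1 \le 1$, the potential obeys $0 \le \psi(\bx) \le 1/2$, giving $F_\psi \le 1/2$. For the gradient I would use the standard identity $\|\nabla \psi(\bx)\|_{p^*} = \|\bx\|_p$ for the $p$-norm potential, which yields $G_\psi \le \max_{\bx \in \mathcal{X}} \|\bx\|_p \le 1$ and, via the dual pairing of $(p,p^*)$, $B_\psi(\bv_1;\by_1) \le \psi(\bv_1) + \|\nabla\psi(\by_1)\|_{p^*}\|\bv_1 - \by_1\|_p \le 1/2 + 2 = 5/2$. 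All three are $\mathcal{O}(1)$. For the variance proxy, since $p = q$ and $q = 1 + 1/\log(d) < 2$ for $d \ge 3$, we have $\min\{q,2\} = q$, so $\xi_{p,q}(d) = d^{1 + 2/q - 2/p} = d$.

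The decisive step, and the one I expect to be the main obstacle to state cleanly, is the strong-convexity modulus: $\psi(\bx) = \|\bx\|_p^2/2$ is $(p-1)$-strongly convex with respect to $\|\cdot\|_p$, a classical but delicate fact. With $p = 1 + 1/\log(d)$ this gives $\lambda = p - 1 = 1/\log(d)$, so $1/\lambda = \log(d)$. Substituting everything into the bound of Theorem \ref{thm.dynamic regret of PBMD}, the factor $\xi_{p,q}(d)/\lambda = d\log(d)$ emerges, while $F_\psi + B_\psi(\bv_1;\by_1) + G_\psi \mathcal{P}_{T,p} = \mathcal{O}(1 + \mathcal{P}_{T,p})$, and I would conclude the claimed rate $\mathcal{O}\!\left(G\sqrt{d\log(d)T(1+\mathcal{P}_{T,p})}\right)$, the entire improvement in the exponent of $p$ being traded against the $\log(d)$ blow-up of $1/\lambda$.
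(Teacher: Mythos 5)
Your proposal is correct and follows essentially the same route as the paper: verify Assumption \ref{assumption.bounded region} with $R=1$, $r=d^{1/p-1}$, show $F_\psi$, $B_\psi(\bv_1;\by_1)$, $G_\psi$ are $\mathcal{O}(1)$ and $\xi_{p,q}(d)=d$, then invoke the $(p-1)$-strong convexity of $\psi$ so that $1/\lambda=\log(d)$ produces the $d\log(d)$ factor in Theorem \ref{thm.dynamic regret of PBMD}. Your only deviations are cosmetic and, if anything, cleaner: you bound $G_\psi\le 1$ via the exact duality identity $\|\nabla\psi(\bx)\|_{p^*}=\|\bx\|_p$ where the paper settles for $\exp(1)$ through an $\ell_\infty$ comparison, and you bound $B_\psi(\bv_1;\by_1)$ by a direct H\"older-type estimate instead of citing Lemma 3 of \citet{gentile1999robustness}.
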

\begin{proof}
    Choose $\by^1=0$. Since $\|\bx\|_p \le \|\bx\|_1$ for every $\bx \in \reals^d$, we have $\mathcal{X} \subseteq \mathbb{B}_p^d$. Conversely, Lemma \ref{lemma.vector norm} implies $\|\bx\|_1 \le d^{1-1/p}\|\bx\|_p$, so $d^{1/p-1}\mathbb{B}_p^d \subseteq \mathcal{X}$. Hence Assumption \ref{assumption.bounded region} holds with $ r = d^{1/p-1}$, $R = 1$. Next, note that
    \begin{align*}
        F_\psi^\alpha &\le \frac{1}{2}, \quad \sup_{\bv \in \mathcal{X}} B_{\psi}(\bv;\by^1) = \sup_{\bv \in \mathcal{X}} \frac{\|\bv\|_p^2}{2} \le \frac{1}{2}.
    \end{align*}
    For $\psi(\bx)=\|\bx\|_p^2/2$ with $p \in (1,2)$, a direct calculation gives
    \[
       \nabla \psi(\bx) = \|\bx\|_p^{2-p}\big(|x_1|^{p-2}x_1,\ldots,|x_d|^{p-2}x_d\big),
    \]
    which implies
    \begin{align*}
        \|\nabla \psi(\bx)\|_{p^*}
        = \|\bx\|_p^{2-p}\left(\sum_{j=1}^d |x_j|^{(p-1)p^*}\right)^{1/p^*}
        = \|\bx\|_p^{2-p}\left(\sum_{j=1}^d |x_j|^{p}\right)^{1/p^*}
        = \|\bx\|_p.
    \end{align*}
    Since $\|\bx\|_p \le \|\bx\|_1 \le 1$ on $\mathcal{X}$, we get $G_\psi^\alpha \le 1$. Moreover, we have
    \[
        \xi_{p,q}(d) = d^{1 + \frac{2}{p} - \frac{2}{p}} = d,
        \quad
        \lambda = p-1 = \frac{1}{\log(d)},
    \]
    since $\psi(\bx)=\|\bx\|_p^2/2$ is $(p-1)$-strongly convex with respect to $\ell_p$-norm. Hence we may again take $A_\psi^\alpha=2$. Substituting these estimates into Theorem \ref{thm.dynamic regret of PBMD} yields
    \[
        \mE \left[\sum_{t=1}^T \frac{f_t(\bx^{t,+}) + f_t(\bx^{t,-})}{2} - f_t(\bu^t)\right]
        = \mathcal{O}\left(G \sqrt{d\log(d)T\left(1 + \mathcal{P}_{T,p}\right)}\right).
    \]
\end{proof}

\section{The Simplex Geometry}\label{section.simplex}
In this section, we study parameter-free bandit mirror descent over the simplex $ \Delta_d \triangleq \{\bx \in \reals^d : \bx \ge 0,\ \sum_{j=1}^d x_j = 1\}$. Since the simplex does not satisfy Assumption \ref{assumption.bounded region}, we treat this geometry separately. Fix the smoothing parameter $\mu$ used below. We assume that the adversary chooses functions $f_t$ that are convex, and $G$-Lipschitz with respect to $\ell_1$ on an open convex set $\mathcal U\subseteq \reals^d$. The comparator sequence is constrained to $\Delta_d$, but the learner is allowed to query $f_t$ at the two perturbed points generated by the algorithm. More precisely, after setting $\alpha=\mu$, define the shrunken simplex
\[
   \mathcal X_\alpha \triangleq \{(1-\alpha)\bx+\alpha\bc:\bx\in\Delta_d\}, \quad \bc\triangleq (1/d,\ldots,1/d)^\top .
\]
We require the following condition
\begin{equation}\label{eq.simplex extension}
   \mathcal X_\alpha + \mu \mathbb B_1^d \subseteq \mathcal U,
\end{equation}
which guarantees that both the queried points $\by^t\pm\mu\bs^t$ and the smoothing averages $\by+\mu\mathbb B_1^d$ are well defined for all $\by\in\mathcal X_\alpha$. For any comparator sequence $\bu^1,\ldots,\bu^T \in \Delta_d$,  we define the scaled comparator $\bv^t = (1-\alpha)\bu^t + \alpha \bc \in \mathcal{X}_\alpha$. Then the scaled comparators satisfy
\begin{align*}
    \sum_{t=1}^{T-1} \|\bv^{t+1} - \bv^t\|_1 = (1-\alpha)\sum_{t=1}^{T-1} \|\bu^{t+1} - \bu^t\|_1 \le \sum_{t=1}^{T-1} \|\bu^{t+1} - \bu^t\|_1 \le \mathcal{P}_{T,1}.
\end{align*}
Moreover, using the $\ell_1$-diameter of $\Delta_d$, we have
\begin{equation}\label{eq.simplex-comparator-bias}
   |f_t(\bv^t)-f_t(\bu^t)|
   \le G\|\bv^t-\bu^t\|_1
   = G\alpha\|\bc-\bu^t\|_1
   \le 2G\alpha,
\end{equation}
where we used that $\|\bc - \bu^t\|_1 \le 2$. Finally, every $\by \in \mathcal{X}_\alpha$ satisfies $y_j \ge \alpha/d$ for all $j$, so the entropy regularizer $\psi(\bx)=\sum_{j=1}^d x_j\log x_j$ is well-defined and continuously differentiable on $\mathcal{X}_\alpha$, and in particular at all iterates and scaled comparators. Recall that the proofs of Lemmas \ref{lemma.pathwise mirror descent}, 
\ref{lemma.bound base}, and \ref{lemma.bound meta} rely only on the first-order optimality condition over a convex update set, the differentiability of the regularizer at the iterates, and its strong convexity on that set. These properties hold on the relative interior of the simplex, and in particular on $\mathcal{X}_\alpha$, so the arguments of these lemmas remain valid in the current setting. For the negative entropy regularizer, it is well known that $B_\psi(\bx;\by)\ge \frac{1}{2}\|\bx-\by\|_1^2$, $\forall \bx,\by\in\Delta_d$, which shows that $\psi$ is $1$-strongly convex with respect to the $\ell_1$ norm under the condition used in Lemma \ref{lemma.pathwise mirror descent}.

\begin{theorem}[Simplex]\label{proposition.BMD simplex with entropy}
   Consider the simplex model above and assume \eqref{eq.simplex extension}. Let $d,T\ge3$, $p=q=1$, and run \texttt{PBMD} on $\mathcal X_\alpha$ with common initialization $\by^1=\bc$, entropy regularizer $\psi(\bx)=\sum_{j=1}^d x_j\log x_j$, and $\alpha=\mu$. Define
   \[
       A_\Delta \triangleq 2\log(d)+1,
       \quad
       c_\Delta\triangleq 3+2\zeta_1(d)\le 5,
   \]
   and choose
   \[
       \mu=\min\left\{\frac{\sqrt{A_\Delta d}}{\sqrt T\,c_\Delta},\frac12\right\},
       \quad
       \gamma=\frac{1}{\sqrt{48}(1+\sqrt2)G\sqrt{dT}} .
   \]
   The step-size pool is
   \[
      \eta_{(k)}=2^{k-1}
      \sqrt{\frac{A_\Delta}{6(1+\sqrt2)^2G^2dT}},
      \quad k=1,\ldots,N,
   \]
   where
   \[
      N=\left\lceil\frac12\log_2\left(1+\frac{2G_\Delta^\alpha T}{A_\Delta}\right)\right\rceil+1,
      \quad
      G_\Delta^\alpha\triangleq 1+\log(d/\alpha).
   \]
   Then, for every comparator sequence $\bu^1,\ldots,\bu^T\in\Delta_d$, we have
   \[
      \mE\left[
      \sum_{t=1}^T
      \frac{f_t(\bx^{t,+})+f_t(\bx^{t,-})}{2}-f_t(\bu^t)
      \right]
      =\mathcal O\left(
      G\sqrt{d\log(d)T\log(T)\left(1+\mathcal P_{T,1}\right)}
      \right),
   \]
   where the expectation is taken with respect to all randomness $\bs^1,\ldots,\bs^T$.
\end{theorem}
\begin{proof}
    For the entropy regularizer and the initialization $\by^1=\bc$, we have
    \[
        F_\psi^\alpha\le \log(d),
        \quad
        \sup_{\bv\in\mathcal X_\alpha}B_\psi(\bv;\bc)
        \le \sup_{\bv\in\Delta_d}\sum_{j=1}^d v_j\log\frac{v_j}{1/d}
        \le \log(d).
    \]
    Since $\Delta_d\subseteq \mathbb B_1^d$, the radius term in \eqref{eq.definition A psi} is $R^2=1$, so the choice $A_\Delta \triangleq 2\log(d)+1$ satisfies the analogue of \eqref{eq.definition A psi}. Also, for every $\by\in\mathcal X_\alpha$, it holds that
    \[
       \|\nabla\psi(\by)\|_\infty
       =\max_{1\le j\le d}|1+\log y_j|
       \le 1+\log(d/\alpha) \triangleq G_\Delta^\alpha .
    \]
    The second-moment bound in Lemma \ref{lemma.properties of gradient estimator} gives $\xi_{1,1}(d)=d$, and the entropy is $1$-strongly convex with respect to $\ell_1$ on the simplex.

    The regret decomposition is the same as \eqref{eq.three terms decomposition}, except that the comparator shrinkage is centered at $\bc$. Applying the proofs of Lemmas \ref{lemma.bound base} and \ref{lemma.bound meta} with $A_\Delta$, $G_\Delta^\alpha$, $\xi_{1,1}(d)=d$, and $\lambda=1$ yields
    \begin{equation}\label{eq.simplex-intermediate-bound}
        \mE\left[
        \sum_{t=1}^T
        \frac{f_t(\bx^{t,+})+f_t(\bx^{t,-})}{2}-f_t(\bu^t)
        \right]
        \le
        C_\Delta G\sqrt{dT\left(A_\Delta+G_\Delta^\alpha\mathcal P_{T,1}\right)}
        +GT\mu c_\Delta
    \end{equation}
    for some constant $C_\Delta>0$. For the bias term $G T \mu c_\Delta$, the chosen value of $\mu$ ensures that it is bounded by a constant multiple of $G\sqrt{A_\Delta d T}$. Indeed, if the first term in the definition of $\mu$ is attained, then $G T \mu c_\Delta = G\sqrt{A_\Delta d T}$. On the other hand, if $\mu = 1/2$, then $\sqrt{A_\Delta d}/(\sqrt{T} c_\Delta) > 1/2$, which implies $T c_\Delta \le 2\sqrt{A_\Delta d T}$ and hence $G T \mu c_\Delta \le G\sqrt{A_\Delta d T}$. Therefore, in all cases, the bias term is of order $\mathcal{O}(G\sqrt{A_\Delta d T})$. Finally, because $\alpha=\mu$ and $d,T\ge3$, the above choice of $\mu$ implies $G_\Delta^\alpha=1+\log(d/\mu)=\mathcal O(\log(dT))$. We thus have
    \[
        A_\Delta+G_\Delta^\alpha\mathcal P_{T,1}
        =\mathcal O\left(\log(d)+\mathcal P_{T,1}\log(dT)\right).
    \]
    Since $\log(dT)\le 2\log(d)\log(T)$ and $\log(d)\le \log(d)\log(T)$ for $d,T\ge3$, we obtain
    \begin{align*}
        \mE\left[
      \sum_{t=1}^T
      \frac{f_t(\bx^{t,+})+f_t(\bx^{t,-})}{2}-f_t(\bu^t)
      \right]
      =\mathcal O\left(
      G\sqrt{d\log(d)T\log(T)\left(1+\mathcal P_{T,1}\right)}
      \right).
    \end{align*}
    The proof is completed.
\end{proof}

We close this section by proving a lower bound for the simplex geometry. The lower bound is stated for the two-point feedback model on the simplex, in which the learner queries two points
\(\bx^{t,+},\bx^{t,-}\in\Delta_d\) at each round. The proof proceeds in two steps. First, we establish a lower bound on static regret for the bandit algorithm with two-point feedback on the simplex. We then convert this static bound into a dynamic one via a blocking construction, following the approach in \cite{zhao2021bandit}. For any path variation $\cP_{T,1} \ge 0$, define the comparator class
\[
   \mathcal C_T^\Delta(\mathcal P_{T,1})
   \triangleq
   \left\{
      \bu^1,\ldots,\bu^T\in\Delta_d:
      \sum_{t=1}^{T-1}\|\bu^{t+1}-\bu^t\|_1\le \mathcal P_{T,1}
   \right\}.
\]
Let \(\mathcal F_\Delta(G)\) denote the class of convex functions on \(\Delta_d\) that are \(G\)-Lipschitz with respect to \(\|\cdot\|_1\). We define the minimax dynamic regret over the simplex as
\begin{equation}\label{eq.simplex-minimax-dynamic-regret}
   \mathfrak R_T^\Delta(\mathcal P_{T,1}) \triangleq \inf_{\mathcal A} \sup_{f_1,\ldots,f_T\in\mathcal F_\Delta(G)} \mE\left[ \sum_{t=1}^T \frac{f_t(\bx^{t,+})+f_t(\bx^{t,-})}{2} - \inf_{\bu^1,\ldots,\bu^T\in\mathcal C_T^\Delta(\mathcal P_{T,1})} \sum_{t=1}^T f_t(\bu^t) \right],
\end{equation}
where the infimum is over all possibly randomized two-point bandit algorithms whose queries belong to \(\Delta_d\), and the expectation is with respect to all randomness of the algorithm. We now establish the lower bound on static regret for the bandit algorithm with two-point feedback on the simplex. 

\begin{lemma}\label{lem:static-simplex-lower}
There exists a constant \(c_1>0\) such that, for any
\(n\ge d\ge7\), it holds that
\[
   \inf_{\mathcal A} \sup_{f_1,\ldots,f_n\in\mathcal F_\Delta(G)} \mE \left[\sum_{t=1}^n \frac{f_t(\bx^{t,+})+f_t(\bx^{t,-})}{2} - \min_{\bx\in\Delta_d} \sum_{t=1}^n f_t(\bx) \right] \ge c_1G\sqrt{\frac{dn}{\log d}}.
\]
\end{lemma}
\begin{proof}
Let $m\triangleq \left\lfloor (d-1)/2\right\rfloor$. Since \(d\ge7\), we have \(m\ge3\). For any \(\bx\in\Delta_d\), define the projection map $\pi_{d,m}:\reals^d\to\reals^m$ as
\[
   \bigl(\pi_{d,m}(\bx)\bigr)_i \triangleq x_i-x_{m+i}, \quad \forall i=1,\ldots,m.
\]
We first show that $\pi_{d,m}(\Delta_d)=\mathbb B_1^m$. Note that
\begin{align*}
	\|\pi_{d,m}(\bx)\|_1 = \sum_{i=1}^m |x_i-x_{m+i}| \le \sum_{i=1}^m (x_i+x_{m+i}) \le 1,
\end{align*}
which implies that \(\pi_{d,m}(\Delta_d)\subseteq\mathbb B_1^m\). Conversely, for any \(\bz\in\mathbb B_1^m\), define the embedding map $\iota_{d,m}:\reals^m \to\reals^d$ as 
\begin{align*}
	\iota_{d,m}(\bz)
   \triangleq
   \left(
      z_1^+,\ldots,z_m^+,
      z_1^-,\ldots,z_m^-,
      1-\|\bz\|_1,
      0,\ldots,0
   \right)^\top
   \in\reals^d,
\end{align*}
where $z_i^+\triangleq \max\{z_i,0\}$ and $ z_i^-\triangleq \max\{-z_i,0\}$. Since \(\sum_{i=1}^m(z_i^++z_i^-)=\|\bz\|_1\le1\), we have \(\iota_{d,m}(\bz)\in\Delta_d\). Moreover, note that $\pi_{d,m}\bigl(\iota_{d,m}(\bz)\bigr)=\bz$, which yields that
\begin{equation}\label{eq:pi-simplex-surjective}
   \pi_{d,m}(\Delta_d)=\mathbb B_1^m .
\end{equation}
Let \(\mathcal F_{\mathbb B_1^m}(G)\) be the class of convex functions on \(\mathbb B_1^m\) that are \(G\)-Lipschitz with respect to \(\|\cdot\|_1\).  Now fix any \(h_1,\ldots,h_n\in\mathcal F_{\mathbb B_1^m}(G)\).
For each \(t\), define the induced simplex loss
\[
   f_t(\bx) \triangleq h_t\bigl(\pi_{d,m}(\bx)\bigr), \quad \forall \bx \in\Delta_d.
\]
Since \(\pi_{d,m}\) is linear and \(h_t\) is convex, \(f_t\) is convex on
\(\Delta_d\). Consequently, for any \(\bx,\by\in\Delta_d\), we have
\[
\begin{aligned}
   |f_t(\bx)-f_t(\by)| \le \ &G\|\pi_{d,m}(\bx)-\pi_{d,m}(\by)\|_1 \\
   = \ &G\sum_{i=1}^m |(x_i-y_i)-(x_{m+i}-y_{m+i})| \\
   \le \ &G\sum_{i=1}^m \bigl(|x_i-y_i|+|x_{m+i}-y_{m+i}|\bigr) \\
   \le \ &G\|\bx-\by\|_1,
\end{aligned}
\]
which implies that\(f_t\in\mathcal F_\Delta(G)\).

Fix any bandit algorithm with two-point feedback for the simplex problem. We construct a bandit algorithm with two-point feedback  for the \(\ell_1\)-ball problem as follows. Whenever the bandit algorithm on the simplex queries \(\bx^{t,+},\bx^{t,-}\in\Delta_d\), the bandit algorithm on the \(\ell_1\)-ball algorithm queries $\bz^{t,+} = \pi_{d,m}(\bx^{t,+})$ and $\bz^{t,-} = \pi_{d,m}(\bx^{t,-})$. Besides, the observed values are exactly the same as $f_t(\bx^{t,+})=h_t(\bz^{t,+})$ and $ f_t(\bx^{t,-})=h_t(\bz^{t,-})$. By \eqref{eq:pi-simplex-surjective}, it follows that
\begin{align*}
	 \min_{\bx\in\Delta_d} \sum_{t=1}^n f_t(\bx) = \min_{\bx\in\Delta_d} \sum_{t=1}^n h_t\bigl(\pi_{d,m}(\bx)\bigr) = \min_{\bz\in\mathbb B_1^m} \sum_{t=1}^n h_t(\bz).
\end{align*}
Hence every bandit algorithm with two-point feedback on simplex algorithm induces an \(\ell_1\)-ball algorithm with the same static regret on the corresponding losses. For the two-point feedback model, based on the Proposition~2 of \cite{duchi2015optimal}, for any algorithm, we can always find a sequence of loss functions $h_1, \ldots, h_n$ such that
\begin{align*}
   \inf_{\mathcal A} \sup_{h_1,\ldots,h_n\in\mathcal F_{\mathbb B_1^m}(G)} \mE \left[\sum_{t=1}^n \frac{h_t(\bz^{t,+})+h_t(\bz^{t,-})}{2} - \min_{\bz\in\mathbb B_1^m} \sum_{t=1}^n h_t(\bz) \right] \ge c_0 G\sqrt{\frac{mn}{\log(3m)}},
\end{align*}
where $c_0 > 0$ is a constant. Therefore, we have
\[
   \inf_{\mathcal A}
   \sup_{f_1,\ldots,f_n\in\mathcal F_\Delta(G)}
   \mE
   \left[
      \sum_{t=1}^n
      \frac{f_t(\bx^{t,+})+f_t(\bx^{t,-})}{2}
      -
      \min_{\bx\in\Delta_d}
      \sum_{t=1}^n f_t(\bx)
   \right]
   \ge
   c_0G\sqrt{\frac{mn}{\log(3m)}} .
\]
Since \(m=\lfloor(d-1)/2\rfloor\), for \(d\ge7\), we have \(m=\Theta(d)\) and \(\log(3m)=\Theta(\log d)\). Thus there exists a universal constant \(c_1>0\) such that
\[
   c_0G\sqrt{\frac{mn}{\log(3m)}} \ge c_1G\sqrt{\frac{dn}{\log d}} .
\]
The proof is completed.
\end{proof}

We now prove the dynamic regret lower bound by a blocking argument. Each block carries
an independent copy of the hard static simplex instance from Lemma~\ref{lem:static-simplex-lower}. The comparator is constant inside each block and changes only at block boundaries.
\begin{theorem}\label{thm:simplex-dynamic-lower}
There exists a constant \(c>0\) such that for any \(d\ge7\) and any path variation satisfying $0 \le \mathcal P_{T,1}\le T/8d -1$, we have
\[
   \mathfrak R_T^\Delta(\mathcal P_{T,1}) \ge cG \sqrt{\frac{dT(1+\mathcal P_{T,1})}{\log d}} .
\]
\end{theorem}
\begin{proof}
Let $ K \triangleq \left\lfloor \mathcal P_{T,1}/2 \right\rfloor+1$. Since $1+\mathcal P_{T,1}\le T/(8d)$, we have $K\le 1+\mathcal P_{T,1}\le T/(8d)$, which implies that $n\triangleq \left\lfloor T/K \right\rfloor \ge d$. We partition the first \(Kn\) rounds into \(K\) consecutive blocks
\[
   \mathcal I_i \triangleq \{(i-1)n+1,\ldots,in\}, \quad i=1,\ldots,K .
\]
The remaining \(T-Kn\) rounds are assigned the zero loss function. For each block \(\mathcal I_i\), we draw an independent hard static simplex instance from Lemma~\ref{lem:static-simplex-lower}. For the losses in block \(i\), we choose
\[
   \bu^{(i)}
   \in
   \arg\min_{\bu\in\Delta_d}
   \sum_{t\in\mathcal I_i} f_t(\bu).
\]
We define the comparator sequence by $\bu^t=\bu^{(i)}$ for all $t\in\mathcal I_i$, and set \(\bu^t=\bu^{(K)}\) on the remaining \(T-Kn\) rounds. Since the \(\ell_1\)-diameter of \(\Delta_d\) is at most \(2\), we have $\|\bu^{(i+1)}-\bu^{(i)}\|_1\le2$, $\forall i=1,\ldots,K-1$. Consequently, it follows that
\begin{align*}
       \sum_{t=1}^{T-1}\|\bu^{t+1}-\bu^t\|_1 = \sum_{i=1}^{K-1} \|\bu^{(i+1)}-\bu^{(i)}\|_1 \le 2(K-1) \le \mathcal P_{T,1},
\end{align*}
which validates that \(\bu^1,\ldots,\bu^T\in
\mathcal C_T^\Delta(\mathcal P_{T,1})\).

Recalling that $n \ge d$, Lemma~\ref{lem:static-simplex-lower} can be applied on each block. Thus, we have
\[
   \mE\left[ \sum_{t\in\mathcal I_i} \frac{f_t(\bx^{t,+})+f_t(\bx^{t,-})}{2} - \min_{\bu\in\Delta_d} \sum_{t\in\mathcal I_i}f_t(\bu) \right] \ge c_1G\sqrt{\frac{dn}{\log d}}.
\]
Summing over the blocks and using the constructed comparator sequence yields
\[
   \mathfrak R_T^\Delta(\mathcal P_{T,1})
   \ge
   Kc_1G\sqrt{\frac{dn}{\log d}} .
\]
Given \(n=\lfloor T/K\rfloor\) and \(K\le T/(8d)\le T/2\), we have $ Kn \ge T-K \ge T/2$, which further implies that $K\sqrt n = \sqrt{K(Kn)} \ge \sqrt{KT/2}$. Utilizing \(K\ge(1+\mathcal P_{T,1})/3\), we obtain
\begin{align*}
    \mathfrak R_T^\Delta(\mathcal P_{T,1}) \ge c_1G \sqrt{\frac{d}{\log d}} \sqrt{\frac{KT}{2}} \ge cG \sqrt{\frac{dT(1+\mathcal P_{T,1})}{\log d}},
\end{align*}
for some constant \(c>0\). The proof is completed.
\end{proof}
\begin{remark}
    We point out a subtle difference between the oracle models used in the upper and lower bounds. The upper bound in Theorem \ref{proposition.BMD simplex with entropy} is proved under an extension-oracle model, where the perturbed points may lie in $\mathcal{U}$ but outside the simplex. In contrast, the lower bound is stated for the standard in-domain simplex bandit model, where the learner queries two points in the simplex at each round. Closing the remaining logarithmic gaps and developing fully in-domain simplex perturbation schemes are interesting directions for future work.
\end{remark}

\section{Conclusion}
In this paper, we studied non-stationary bandit convex optimization with two-point feedback under the dynamic regret. We proposed a parameter-free bandit mirror descent method which enjoys a near-optimal regret. By exploiting the second-order moment of the gradient estimator, the proposed method attains the optimal $\cO(\sqrt{dT(1+\cP_{T,2})})$ dynamic regret in Euclidean geometry and extends to non-Euclidean domains. It further achieves $\cO(\sqrt{d\log(d)T(1+\cP_{T,p})})$ regret over the cross-polytope for $p=1+1/\log d$, and $O(\sqrt{d\log(d)T\log(T)(1+\cP_{T,1})})$ regret over the simplex.

\section*{Appendix: Technical Lemmas}
\begin{lemma}\label{lemma.cauchy schwarz with p norm}
    For any $\bx, \by \in \reals^d$, $p \in [1,\infty]$ and $\varepsilon > 0$, it holds that
    \begin{align*}
        \inner{\bx}{\by} \le \frac{\varepsilon}{2}\|\bx\|_p^2 + \frac{1}{2\varepsilon}\|\by\|_{p^*}^2,
    \end{align*}
    where $p^*$ is the conjugate of $p$, i.e., $1/p + 1/p^* = 1$.
\end{lemma}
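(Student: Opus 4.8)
The plan is to combine H\"older's inequality with the elementary scalar Young inequality. First I would invoke H\"older's inequality, which for conjugate exponents $1/p + 1/p^* = 1$ gives $\inner{\bx}{\by} \le \|\bx\|_p \|\by\|_{p^*}$. Under the usual conventions (in particular $1/\infty = 0$) this holds uniformly over the full range $p \in [1,\infty]$, so the boundary cases $p = 1$ and $p = \infty$ require no separate treatment.

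Next I would apply the weighted arithmetic--geometric mean (Young) inequality to the two nonnegative scalars $a \triangleq \|\bx\|_p$ and $b \triangleq \|\by\|_{p^*}$. Concretely, expanding the nonnegative square $\left(\sqrt{\varepsilon}\, a - b/\sqrt{\varepsilon}\right)^2 \ge 0$ yields $2ab \le \varepsilon a^2 + \varepsilon^{-1} b^2$, hence $ab \le \tfrac{\varepsilon}{2} a^2 + \tfrac{1}{2\varepsilon} b^2$. Substituting $a = \|\bx\|_p$ and $b = \|\by\|_{p^*}$ and chaining this with the H\"older bound produces exactly the claimed estimate $\inner{\bx}{\by} \le \tfrac{\varepsilon}{2}\|\bx\|_p^2 + \tfrac{1}{2\varepsilon}\|\by\|_{p^*}^2$.

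There is essentially no obstacle here: the statement is a standard Fenchel--Young type estimate, and both ingredients---H\"older's inequality and the completion of a square---are elementary. The only point warranting a moment of care is confirming that H\"older applies uniformly for every $p \in [1,\infty]$ with the stated conjugacy relation, which it does under the conventions fixed in the Notation paragraph.
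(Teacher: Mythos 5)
Your proposal is correct and follows essentially the same route as the paper's proof: bound $\inner{\bx}{\by}$ by $\|\bx\|_p\|\by\|_{p^*}$ (the paper cites the dual-norm definition, you cite H\"older's inequality---these are the same fact), then apply the scalar inequality $ab \le \tfrac{\varepsilon}{2}a^2 + \tfrac{1}{2\varepsilon}b^2$. Your version merely makes explicit the completion-of-the-square argument and the boundary conventions for $p \in \{1,\infty\}$, which the paper leaves implicit.
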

\begin{proof}
    Note that $\|\cdot\|_{p^*}$ is the dual of $\|\cdot\|_{p}$. From the definition of dual norm, we have
    \begin{align*}
        \inner{\bx}{\by} \le \|\bx\|_p \|\by\|_{p^*}.
    \end{align*}
    Then we complete the proof by the fact that $ab \le \frac{\varepsilon}{2}a^2 + \frac{1}{2\varepsilon}b^2$ for $a, b \in \reals$ and $\varepsilon > 0$.
\end{proof}

\begin{lemma}\label{lemma.vector norm}
    For any vector $\bx \in \reals^d$, it holds that
    \begin{align*}
        \|\bx\|_q \le \|\bx\|_p \le d^{1/p - 1/q}\|\bx\|_q, \quad \forall \ 1 \le p \le q \le \infty.
    \end{align*}
    Consequently, for any $p, q \in [1,\infty]$, we have
    \begin{align*}
       \|\bx\|_q \le d^{1/q - 1/\max\{q,p\}} \|\bx\|_p.
    \end{align*}
\end{lemma}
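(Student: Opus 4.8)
The plan is to first prove the two-sided estimate in the ordered regime $1 \le p \le q \le \infty$, and then obtain the unordered consequence by a short case analysis. For the lower bound $\norm{\bx}_q \le \norm{\bx}_p$, I would normalize so that $\norm{\bx}_p = 1$ (the case $\bx = 0$ being trivial). Then each coordinate satisfies $|x_i| \le 1$, and since $q \ge p$ we have $|x_i|^q \le |x_i|^p$. Summing over $i$ gives $\sum_i |x_i|^q \le \sum_i |x_i|^p = 1$, hence $\norm{\bx}_q \le 1 = \norm{\bx}_p$. The limiting case $q = \infty$ is immediate from $\max_i |x_i| \le (\sum_i |x_i|^p)^{1/p}$.

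For the upper bound $\norm{\bx}_p \le d^{1/p - 1/q}\norm{\bx}_q$, the key tool is H\"older's inequality applied with the exponents $r = q/p$ and its conjugate $r^* = q/(q-p)$, which satisfy $1/r + 1/r^* = 1$. Writing $\norm{\bx}_p^p = \sum_i |x_i|^p \cdot 1$ and pairing the factors $|x_i|^p$ and $1$, H\"older yields $\norm{\bx}_p^p \le (\sum_i |x_i|^q)^{p/q}(\sum_i 1)^{(q-p)/q} = \norm{\bx}_q^p \, d^{(q-p)/q}$. Taking $p$-th roots gives $\norm{\bx}_p \le d^{(q-p)/(pq)}\norm{\bx}_q = d^{1/p - 1/q}\norm{\bx}_q$. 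The boundary case $q = \infty$ follows directly from $\norm{\bx}_p^p = \sum_i |x_i|^p \le d\,\norm{\bx}_\infty^p$, consistent with the convention $1/\infty = 0$.

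For the consequence, I would split on the relative order of $p$ and $q$. If $p \le q$, then $\max\{q,p\} = q$ and the claimed exponent $1/q - 1/\max\{q,p\}$ equals $0$, so the inequality reduces to the monotonicity bound $\norm{\bx}_q \le \norm{\bx}_p$ established above. If instead $p > q$, then $\max\{q,p\} = p$ and the exponent becomes $1/q - 1/p$; this is precisely the upper bound of the first part with the roles of $p$ and $q$ interchanged (applicable since now $q < p$), giving $\norm{\bx}_q \le d^{1/q - 1/p}\norm{\bx}_p$. The main (and essentially only) obstacle is the bookkeeping of the boundary cases where $p$, $q$, or both equal $\infty$; each must be checked separately against the $1/\infty = 0$ convention, but all of them reduce to the elementary comparison between $\norm{\cdot}_\infty$ and a finite-order norm, so no genuine difficulty arises.
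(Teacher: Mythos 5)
Your proposal is correct and uses the same key tool as the paper: H\"older's inequality with exponents $q/p$ and $q/(q-p)$ applied to $\sum_i |x_i|^p \cdot 1$, yielding exactly the paper's computation. The paper leaves the monotonicity bound $\|\bx\|_q \le \|\bx\|_p$ and the case analysis for the unordered consequence implicit, and your normalization argument and $p \le q$ versus $p > q$ split are the standard ways to fill in those routine steps.
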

\begin{proof}
    Fix $1 \le p \le q \le \infty$. The inequality $\|\bx\|_q \le \|\bx\|_p$ is the monotonicity of $\ell_r$-norms. For the reverse inequality, Hölder's inequality gives
    \[
        \|\bx\|_p^p = \sum_{i=1}^d |x_i|^p \cdot 1
        \le \left(\sum_{i=1}^d |x_i|^q\right)^{p/q}\left(\sum_{i=1}^d 1^{q/(q-p)}\right)^{1-p/q}
        = d^{1-p/q}\|\bx\|_q^p.
    \]
    Taking the $p$-th root yields $\|\bx\|_p \le d^{1/p - 1/q}\|\bx\|_q$. For the second claim, if $q \ge p$, we then have
    \[
        \|\bx\|_q \le \|\bx\|_p = d^{1/q - 1/\max\{q,p\}}\|\bx\|_p,
    \]
    since $\max\{q,p\}=q$. If $q < p$, then applying the first part with the roles of $p$ and $q$ exchanged yields
    \[
        \|\bx\|_q \le d^{1/q - 1/p}\|\bx\|_p = d^{1/q - 1/\max\{q,p\}}\|\bx\|_p.
    \]
    We complete the proof.
\end{proof}

\begin{lemma}[\citet{chen1993convergence}]\label{lemma.three point bregman}
    Let $B_\psi$ be the Bregman divergence with respect to $\psi: \operatorname{int}(\mathcal{X}) \rightarrow \mathbb{R}$. Then, for any three points $\bx, \by \in \operatorname{int}(\mathcal{X})$ and $\bz \in \mathcal{X}$, the following identity holds
    \begin{equation*}
        B_\psi(\bz ; \bx)+B_\psi(\bx ; \by)-B_\psi(\bz ; \by)=\langle\nabla \psi(\by)-\nabla \psi(\bx), \bz-\bx\rangle.
    \end{equation*}
\end{lemma}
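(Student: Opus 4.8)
The plan is to prove this identity by brute-force substitution of the definition of the Bregman divergence, since it requires nothing about $\psi$ beyond differentiability: it is a purely algebraic identity and no convexity or strong convexity is invoked. Recall from the notation that $B_{\psi}(\bx;\by) = \psi(\bx) - \psi(\by) - \inner{\nabla \psi(\by)}{\bx - \by}$, with the gradient always evaluated at the \emph{second} argument.

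First I would expand each of the three divergences on the left-hand side against its own reference point: $B_{\psi}(\bz;\bx) = \psi(\bz) - \psi(\bx) - \inner{\nabla\psi(\bx)}{\bz-\bx}$, then $B_{\psi}(\bx;\by) = \psi(\bx) - \psi(\by) - \inner{\nabla\psi(\by)}{\bx-\by}$, and finally $B_{\psi}(\bz;\by) = \psi(\bz) - \psi(\by) - \inner{\nabla\psi(\by)}{\bz-\by}$. Next I would form the combination $B_{\psi}(\bz;\bx)+B_{\psi}(\bx;\by)-B_{\psi}(\bz;\by)$ and group terms by type. The six potential-value contributions $\psi(\bz)-\psi(\bx)+\psi(\bx)-\psi(\by)-\psi(\bz)+\psi(\by)$ telescope to zero, so only the inner-product terms survive, namely $-\inner{\nabla\psi(\bx)}{\bz-\bx} - \inner{\nabla\psi(\by)}{\bx-\by} + \inner{\nabla\psi(\by)}{\bz-\by}$. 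Using linearity of $\inner{\cdot}{\cdot}$ in its second slot, the two $\nabla\psi(\by)$ terms recombine as $\inner{\nabla\psi(\by)}{(\bz-\by)-(\bx-\by)} = \inner{\nabla\psi(\by)}{\bz-\bx}$, and adding back the $\nabla\psi(\bx)$ term yields exactly $\inner{\nabla\psi(\by)-\nabla\psi(\bx)}{\bz-\bx}$, which is the claimed right-hand side.

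Since the argument is pure bookkeeping, there is no genuine obstacle; the only place demanding care is the sign and argument convention of the Bregman divergence, i.e.\ ensuring that in each term the gradient is taken at the correct base point and that the differences $\bz-\bx$, $\bx-\by$, $\bz-\by$ are oriented consistently. Once those conventions are fixed, the telescoping of the $\psi$-values and the recombination of the two $\nabla\psi(\by)$ inner products complete the identity.
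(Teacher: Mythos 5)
Your proof is correct: the paper itself states this lemma without proof, citing \citet{chen1993convergence}, and your direct expansion of the three Bregman divergences — telescoping the $\psi$-values and recombining the two $\nabla\psi(\by)$ inner products — is exactly the standard argument given in that reference. You are also right that the identity is purely algebraic and uses only differentiability of $\psi$, with no convexity needed.
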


\begin{lemma}\label{lemma.derivative of p norm}
    Let $p \in (1,\infty)$. For any $\bx \in \reals^d \setminus \{0\}$, the function $\bx \mapsto \|\bx\|_p$ is differentiable and
    \begin{align*}
        \frac{\partial}{\partial x_j} \|\bx\|_p = \frac{x_j |x_j|^{p-2}}{\|\bx\|_p^{p-1}}, \quad j = 1, \ldots, d.
    \end{align*}
\end{lemma}
\begin{proof}
    Since
    \[
        \|\bx\|_p = \left(\sum_{i=1}^d |x_i|^p\right)^{1/p}
    \]
    and $p>1$, the map is differentiable at every $\bx \neq 0$. By the chain rule, we have
    \begin{align*}
        \frac{\partial}{\partial x_j} \|\bx\|_p
        = \frac{1}{p}\left(\sum_{i=1}^d |x_i|^p\right)^{1/p-1} \cdot p |x_j|^{p-2}x_j
        = \frac{x_j |x_j|^{p-2}}{\|\bx\|_p^{p-1}}.
    \end{align*}
   The proof is completed.
\end{proof}

\bibliographystyle{abbrvnat}
\bibliography{reference}

\end{document}